% -*- TeX:en -*-

\documentclass[psamsfont,12pt]{amsart}
\usepackage[usenames]{xcolor}
\usepackage{amsmath,amssymb}
\usepackage{dsfont}\let\mathbb\mathds
\usepackage{xypic}
\usepackage{amsthm,amsfonts,amsmath,amssymb,latexsym,epsfig,mathrsfs,yfonts,marvosym}
%\usepackage[usenames]{color}

 %\usepackage{graphicx} \usepackage{psfrag} \usepackage{pstricks} \usepackage{subfigure}
%\usepackage{amsmath,amssymb} \usepackage{mathrsfs}
%POUR PORTRAIT: \usepackage{lscape}
%\usepackage{xypic}
\usepackage{hyperref}
\usepackage[english]{babel} %POUR FRANCAIS: \usepackage[french]{babel} \usepackage[latin1]{inputenc}

\newtheorem{theorem}{Theorem}[section]
\newtheorem{lemma}[theorem]{Lemma}
\newtheorem{corollary}[theorem]{Corollary}

\newtheorem{proposition}[theorem]{Proposition}

\newenvironment{example}{\medskip \refstepcounter{theorem}
\noindent  {\bf Example \thetheorem}.\rm}{\,}
\newtheorem*{ack}{Acknowledgements}

\theoremstyle{remark}
\newtheorem{rem}[theorem]{Remark}
\theoremstyle{definition}

\newcommand{\nts}[1]{\marginpar{#1}}
\renewcommand{\nts}[1]{}
\newcommand{\R}{\mathbf{R}}

  \def\bfV{\mbox{{\bf V}}} \def\bfS{\mbox{{\bf S}}}    \def\bfH{\mbox{{\bf H}}}      \def\bfF{\mbox{{\bf F}}}

        \def\mL{\mathcal{L}}               

   \def\bC{\mathbb C}        

\def\R{\mathbb R}\def\C{\mathbb C}

\def\BOne{{\mathchoice {\rm 1\mskip-4mu l} {\rm 1\mskip-4mu l}
                          {\rm 1\mskip-4.5mu l} {\rm 1\mskip-5mu l}}}

\def\fract#1#2{\raise4pt\hbox{$ #1 \atop #2 $}}

\def\bbc{{\mathbb C}}

\def\bbp{{\mathbb P}}
\def\bbq{{\mathbb Q}}
\def\bbr{{\mathbb R}}

\def\bbz{{\mathbb Z}}

\def\grl{\lambda}

\def\gro{\omega}

\def\grD{\Delta}

\def\grS{\Sigma}

\def\bfl{{\bf l}}

\def\bfv{{\bf v}}
\def\bfw{{\bf w}}

\def\cald{{\mathcal D}}

\def\calf{{\mathcal F}}
\def\calg{{\mathcal G}}

\def\la#1{\hbox to #1pc{\leftarrowfill}}
\def\ra#1{\hbox to #1pc{\rightarrowfill}}

\def\gp{{\mathfrak p}}

\def\gr{{\mathfrak r}}

\def\gt{{\mathfrak t}}

\def\gz{{\mathfrak z}}

\def\hook{\mathbin{\hbox to 6pt{%
                 \vrule height0.4pt width5pt depth0pt
                 \kern-.4pt
                 \vrule height6pt width0.4pt depth0pt\hss}}}

\usepackage{marvosym}

      % caract‚àö¬©ristique d'Euler cellulaire
            % caract‚àö¬©ristique d'Euler
\def\del{\partial}              % 'd rond'

\def\kt{\mathfrak{t}}

\def\ra{\rightarrow}

\def\Sas{\mbox{Sas}}

%\definecolor{souris}{gray}{0.9}

\begin{document}

\title{The Einstein--Hilbert functional and the Sasaki--Futaki invariant}
\date{\today}
\author{Charles P. Boyer}
\author{Hongnian Huang}
\author{Eveline Legendre}
\author{Christina W. T{\o}nnesen-Friedman}
\address{Charles P. Boyer, Department of Mathematics and Statistics,
University of New Mexico, Albuquerque, NM 87131.}
\email{cboyer@math.unm.edu} 
\address{Hongnian Huang, Department of Mathematics and Statistics,
University of New Mexico, Albuquerque, NM 87131.}
\email{hnhuang@unm.edu}
\address{Eveline Legendre\\ Universit\'e Paul Sabatier\\
Institut de Math\'ematiques de Toulouse\\ 118 route de Narbonne\\
31062 Toulouse\\ France}
\email{eveline.legendre@math.univ-toulouse.fr}
\address{Christina W. T{\o}nnesen-Friedman, Department of Mathematics, Union
College, Schenectady, New York 12308, USA } \email{tonnesec@union.edu}

\thanks{The first author was partially supported by grant \#245002 from the Simons Foundation. The third author is partially supported by France ANR project EMARKS No ANR-14-CE25-0010. The fourth author was partially supported by grant \#208799 from the Simons Foundation.}

% \keywords{extremal K\"ahler metrics, toric $4$--orbifolds, Hamiltonian $2$--forms}
% \subjclass[2000]{Primary 53C25; Secondary 58E11}
% 

\begin{abstract} We show that the Einstein--Hilbert functional, as a functional on the space of Reeb vector fields, detects the vanishing Sasaki-Futaki invariant. In particular, this provides an obstruction to the existence of a constant scalar curvature Sasakian metric. As an application we prove that K-semistable polarized Sasaki manifold has vanishing Sasaki-Futaki invariant. We then apply this result to show that under the right conditions on the Sasaki join manifolds of \cite{BoToJGA} a polarized Sasaki manifold is K-semistable if only if it has constant scalar curvature.

%We apply this result to prove that assuming some hypothesis on the total transversal scalar curvature there exists a Reeb vector field for which the Sasaki--Futaki invariant vanishes identically.
\end{abstract}

\maketitle

\markboth{The EH-functional and the SF--invariant}{C. Boyer, H. Huang, E. Legendre and C. T{\o}nnesen-Friedman}

\section{Introduction}

In the past ten years, Sasakian geometry has been in the middle of intense activities both in geometry and theoretical physics due to its role in the AdS/CFT correspondence~\cite{BG:book,reebMETRIC,MSYvolume}. It is, more precisely, a Sasaki--Einstein metric of positive scalar curvature that takes part in this matter, but finding obstructions or sufficient conditions for the existence of such a structure has led to an extensive exploration of Sasakian geometry~\cite{BG:book,FutakiOnoWang}. Via its transversal geometry, a Sasaki manifold involves a K\"ahler structure and, as such, the search for a Sasaki--Einstein metric is closely related to the search of a K\"ahler--Einstein metric on which there has been put a great deal of effort. See for e.g. \cite{aubin, CDSI, CDSII, CDSIII, futaki, T, T15, T15b, Y}. Sasaki--Einstein geometry is a very restrictive version of a constant scalar curvature Sasaki (cscS) metric, or even more generally an extremal Sasaki metric~\cite{BGS} and can be viewed as an odd dimensional analogue of the more classical 
subject of constant scalar curvature K\"ahler metrics, which has been actively studied since the pioneering works of Calabi~\cite{calabi}. Sasaki--Einstein metrics may occur when the first Chern class $c_1(D)$ of the contact distribution $D$ vanishes. Because of this cohomological constraint and because the K\"ahler--Einstein equation turns out to be a Monge--Amp\`ere equation, the cscS or analogously constant scalar curvature K\"ahler (cscK) problem, is even more difficult.  

One of the most famous obstructions to the existence of a cscK metric is the Futaki invariant~\cite{futaki}. In this paper we show that the Sasaki version of this invariant, namely the Sasaki-Futaki invariant or tranversal Futaki invariant, defined in~\cite{BGS,FutakiOnoWang}, is closely related to a modified version of the {\it Einstein--Hilbert functional}
 \begin{equation}\label{EHeqn} 
 \bfH(\xi) = \frac{\bfS^{n+1}_{\xi}}{\bfV_\xi^n}
 \end{equation} 
where $\bfS_\xi$ denotes the total transversal scalar curvature, $\bfV_\xi$ the volume, and $2n+1$ is the dimension of the Sasaki manifold. They are both defined as functionals on the cone of compatible Reeb vector fields, the {\it Sasaki cone}. The functional \eqref{EHeqn} is convenient since it is invariant under scaling of the Reeb vector field. The details are explained in~\S\ref{secBG} and \S\ref{secEH}. This functional is a slight modification of the original Einstein--Hilbert functional used in the resolution of the Yamabe problem~\cite{besse}.  

  \begin{theorem}\label{theoCRIT=zero} The set of critical points of the Einstein--Hilbert functional is the union of the zeros of the Sasaki-Futaki invariant and of the total transversal scalar curvature.  
  
 In particular, if a Reeb vector field admits a compatible cscS metric then it is a critical point of the Einstein--Hilbert functional. 
 \end{theorem}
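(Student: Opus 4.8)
The plan is to differentiate $\bfH$ directly on the Sasaki cone and to recognize the resulting one-form as a multiple of the Sasaki--Futaki invariant. Writing $\bfH=\bfS_\xi^{n+1}\bfV_\xi^{-n}$ and using $\bfV_\xi>0$, differentiation in a direction $X$ tangent to the cone at $\xi$ gives
\[
d\bfH_\xi(X)=\frac{\bfS_\xi^{\,n}}{\bfV_\xi^{\,n+1}}\,\Lambda_\xi(X),\qquad
\Lambda_\xi(X):=(n+1)\,\bfV_\xi\,d\bfS_\xi(X)-n\,\bfS_\xi\,d\bfV_\xi(X).
\]
Since $n\ge 1$, the scalar prefactor $\bfS_\xi^{\,n}/\bfV_\xi^{\,n+1}$ vanishes exactly when $\bfS_\xi=0$; hence every $\xi$ with vanishing total transversal scalar curvature is automatically a critical point, which accounts for one of the two families in the statement. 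It then remains to show that on the open set $\{\bfS_\xi\neq 0\}$ the linear functional $\Lambda_\xi$ vanishes precisely when the Sasaki--Futaki invariant does.

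For this I would insert the first-variation formulas for the volume and the total transversal scalar curvature derived in \S\ref{secBG}--\S\ref{secEH}. Parametrizing the deformation of the Reeb field by the transverse Hamiltonian $\eta(X)$, these take the form
\[
d\bfV_\xi(X)=-(n+1)\int_M \eta(X)\,dV_\xi,\qquad
d\bfS_\xi(X)=-n\int_M \eta(X)\,S^T_\xi\,dV_\xi,
\]
where $S^T_\xi$ is the transversal scalar curvature and $dV_\xi$ the Sasakian volume form. Substituting into $\Lambda_\xi$ and writing $\bar{S}^T_\xi=\bfS_\xi/\bfV_\xi$ for the average, the two volume-weighted terms collapse into a single difference,
\[
\Lambda_\xi(X)=-n(n+1)\,\bfV_\xi\int_M \eta(X)\big(S^T_\xi-\bar{S}^T_\xi\big)\,dV_\xi
=-n(n+1)\,\bfV_\xi\,\mathfrak{F}_\xi(X),
\]
the last integral being exactly (a nonzero multiple of) the Sasaki--Futaki invariant $\mathfrak{F}_\xi(X)$. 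Because $\bfV_\xi>0$ and $n(n+1)\neq 0$, on $\{\bfS_\xi\neq 0\}$ we get $d\bfH_\xi=0$ if and only if $\mathfrak{F}_\xi\equiv 0$, and combining with the previous paragraph identifies the critical set with $\{\xi:\bfS_\xi=0\}\cup\{\xi:\mathfrak{F}_\xi\equiv 0\}$.

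I expect the main obstacle to be the second step, namely establishing the first-variation formula for $\bfS_\xi$ with the correct normalization, since deforming $\xi$ changes the transverse K\"ahler class and the argument must track how the contact moment map varies; getting the weights $n$ and $n+1$ to match the homogeneities of $\bfV_\xi$ (degree $-(n+1)$) and $\bfS_\xi$ (degree $-n$) is where the care lies. A reassuring consistency check is the radial direction $X=\xi$: scale invariance of $\bfH$ forces $d\bfH_\xi(\xi)=0$, and indeed Euler's relation gives $\Lambda_\xi(\xi)=0$, matching $\mathfrak{F}_\xi(\xi)=\int_M(S^T_\xi-\bar{S}^T_\xi)\,dV_\xi=0$ because $\eta(\xi)\equiv 1$. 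Finally, the ``in particular'' clause is immediate from the main description together with the metric-independence of $\mathfrak{F}_\xi$: if $\xi$ carries a compatible cscS metric then $S^T_\xi$ is constant in that metric, so the Futaki integral vanishes for every $X$, whence $\mathfrak{F}_\xi\equiv 0$ and $\xi$ is a critical point of $\bfH$.
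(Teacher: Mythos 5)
Your proposal follows the same route as the paper's proof (Lemma~\ref{lemmaCRIT}): differentiate $\bfH$, insert the first variations of $\bfV_\xi$ and $\bfS_\xi$, and collapse the two terms into $-n(n+1)\bfV_\xi\int_M\eta(a)\,\mathring{s}^T\,dv_g$; your coefficients and your observation that $\bfS_\xi=0$ forces $d\bfH_\xi=0$ are both correct. The genuine gap is in the step you label as ``exactly'': with the paper's definition $\bfF_\xi(X)=\int_M X.\psi_g\,dv_g$, where $\psi_g$ is the transverse Ricci potential, the identity $\int_M\eta(a)\,\mathring{s}^T\,dv_g=-\bfF_\xi(\Phi(a))$ is not a matter of definition but is the content of the computation \eqref{INTvarlast}: one writes $\mathring{s}^T=\Delta_B\psi_g$, integrates by parts, and uses that $\eta(a)$ is basic (Remark~\ref{remETAabasic}, which itself needs $[a,\xi]=0$) together with the identity $d\eta(a,\cdot)=g(\Phi(a),\cdot)-\eta(\Phi(a))\eta$. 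In particular the invariant comes out evaluated on the transversally holomorphic field $\Phi(a)$, not on $a$; your formula suppresses this twist. (If you instead adopt $\int_M\eta(a)\,\mathring{s}^T\,dv_g$ as your definition of the invariant, then the metric-independence and the vanishing for cscS metrics that you invoke for the ``in particular'' clause are exactly what must then be proved or cited.) Similarly, the variation formula $d\bfS_\xi(a)=-n\int_M\eta(a)\,s^T_g\,dv_g$ that you quote as an input is the computational heart of the paper's argument: it requires Besse's formula for $\dot{s_g}$, the decomposition \eqref{variation_met} of $\dot{g}$, and the pointwise orthogonality \eqref{b_perp_ric}, so it cannot be treated as off-the-shelf.

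There is a second, smaller gap at the end. The theorem identifies critical points with $\bfS_\xi\neq0$ as zeros of the Sasaki--Futaki invariant, meaning $\bfF_\xi\equiv0$ on all of $\mathfrak{h}(\xi,\bar{J})$; your argument only yields vanishing of $\bfF_\xi$ on the directions coming from the Sasaki cone, i.e.\ on $\Phi(\kt_k)$. To upgrade this to identical vanishing one needs, as the paper notes right after Lemma~\ref{lemmaCRIT}, that every element of $\mathfrak{cr}^+(D,J)$ is conjugate to an element of $\kt_k^+$, that $\mathfrak{cr}(D,J)=\kt_k+\mathfrak{p}$ with $\mathfrak{p}$ a completely reducible $\kt_k$--module, and that $\bfF_\xi([X,Y])=0$; these facts force $\bfF_\xi$ to vanish on the complement of $\kt_k$. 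Without this step the critical locus you obtain is a priori larger than the set described in the statement.
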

More precisely, Lemma~\ref{lemmaCRIT} gives an explicit relation between the derivative of $\bfH$ and the Sasaki-Futaki invariant. 
 \begin{rem}
  The relation $Ric(\xi,\xi)=2n$ which holds for any Sasaki metric with Reeb vector field $\xi$ imposes that a Sasaki--Einstein metric has scalar curvature $2n$. Hence Sasaki--Einstein metric only lies in the transversal subset of the Sasaki cone on which $\bfS = 4n\bfV$. On that subset the Einstein--Hilbert functional is some constant, depending on $n$, times the volume functional which is convex, due to a result of Martelli--Sparks--Yau~\cite{reebMETRIC}.        
 \end{rem}

 Theorem~\ref{theoCRIT=zero} can be interpreted as an extension of a result of Martelli--Sparks--Yau~\cite{reebMETRIC,MSYvolume} to the cscS problem. Theorem~\ref{theoCRIT=zero} also generalizes a result of the third author~\cite{moi2} to the non toric case. Note that, contrary to the Sasaki--Einstein case, there is no chance to prove that
 $\bfH$ is (transversally) convex since, as first shown in ~\cite{moi2}, there are examples of multiple non-isometric cscS metrics in a given Sasaki cone (see also \cite{BoToJGA}). Indeed, here we give other explicit examples of this lack of non-uniqueness.
 
 One important asset of the Einstein--Hilbert functional is that it is much easier to compute than the Sasaki-Futaki invariant. Indeed,  it allows us to give an explicit expression for the Sasaki--Futaki invariant as a rational function when applied to the $\bfw$-cone of the weighted $S^3$-join manifolds $M_{l_1,l_2,\bfw}$ studied in \cite{BoToJGA}, see Lemma \ref{explicitSF} below.  Moreover, in the toric case, the total transversal scalar curvature is expressed as an integral of a certain polytope and it turns out that the Einstein--Hilbert functional coincides with the isoperimetric functional on polytopes tranverse to the moment cone~\cite{moi2}. In that case too, $\bfH$ is a rational function.

 Another obstruction to cscS metrics is the lack of K-semistability, defined in the Sasakian context as the K-semistability of the associated polarized cone $(Y,\xi)$ by Collins and Sz\'ekelyhidi in~\cite{CollinsSz}. By analogy with the K\"ahler case and in the light of the Donaldson--Tian--Yau conjecture~\cite{don:csc,tian:conj,yau} it is natural to wonder if this condition is also sufficient. We apply Theorem~\ref{theoCRIT=zero} toward an affirmative answer by proving the following theorem.

\begin{theorem}\label{theoSTABcrit}
If $(Y,\xi)$ is K-semistable then the Sasaki--Futaki invariant $\bfF_\xi$ vanishes identically, and $\xi$ is a critical point of the Einstein-Hilbert functional $\bfH(\xi)$. Alternatively, if $\xi$ is not a critical point of $\bfH(\xi)$ then $(Y,\xi)$ is not K-semistable. In particular, if $\xi$ is not a critical point of the Einstein-Hilbert functional then $(Y,\xi)$ is K-unstable.   
\end{theorem}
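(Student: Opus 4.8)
The plan is to deduce the vanishing of $\bfF_\xi$ by testing K-semistability against the simplest degenerations --- the product test configurations generated by Reeb-commuting holomorphic vector fields --- and then to feed this into Theorem~\ref{theoCRIT=zero}. The whole argument rests on two facts: that $\bfF_\xi$ is a \emph{linear} functional of its argument, and that K-semistability forces the Donaldson--Futaki invariant to be nonnegative on \emph{every} test configuration, hence on a degeneration and on its inverse at the same time.

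First I would recall, following Collins--Sz\'ekelyhidi~\cite{CollinsSz}, that each element $v$ of the Lie algebra $\kt$ of the maximal torus acting on the polarized cone $(Y,\xi)$ which generates a genuine $\C^\ast$-action (a quasi-regular, or ``rational'', direction) produces a product test configuration $(Y\times\C,\,v)$ for $(Y,\xi)$, and that its Donaldson--Futaki invariant is a fixed nonzero multiple of the Sasaki--Futaki invariant, $DF(Y\times\C,v)=c\,\bfF_\xi(v)$ with $c\neq0$. Granting this, K-semistability gives $c\,\bfF_\xi(v)\ge 0$; applying the same to $-v$, whose product configuration is the inverse $\C^\ast$-action, gives $-c\,\bfF_\xi(v)=c\,\bfF_\xi(-v)\ge 0$ by linearity of $\bfF_\xi$. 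Since $c\neq0$, these two inequalities force $\bfF_\xi(v)=0$ for every rational $v\in\kt$. As the rational directions span $\kt$ over $\R$ and $v\mapsto\bfF_\xi(v)$ is linear, we conclude that $\bfF_\xi\equiv0$ on $\kt$, which is the asserted identical vanishing (its values on the remainder of the transverse holomorphic algebra being controlled by the torus through the character property of the transverse Futaki invariant).

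Once $\bfF_\xi\equiv0$ is established, Theorem~\ref{theoCRIT=zero} gives at once that $\xi$ is a critical point of $\bfH$, the critical set being exactly the union of the zero loci of $\bfF_\xi$ and of $\bfS_\xi$. The two remaining assertions are then pure logic: if $\xi$ is \emph{not} a critical point of $\bfH$, then it lies in neither zero locus, so in particular $\bfF_\xi\not\equiv0$ (this is also visible directly from Lemma~\ref{lemmaCRIT}, which exhibits $d\bfH_\xi$ as a nonzero multiple of $\bfF_\xi$ along $\kt$); the implication just proved then shows that $(Y,\xi)$ cannot be K-semistable, i.e. it is K-unstable.

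The main obstacle is the identity $DF(Y\times\C,v)=c\,\bfF_\xi(v)$. One must match the algebraic Donaldson--Futaki invariant, read off from the asymptotics of the Hilbert series and index character of the cone as in~\cite{CollinsSz}, against the differential-geometric Sasaki--Futaki invariant entering Lemma~\ref{lemmaCRIT}, carefully tracking the transverse Riemann--Roch and localization constants so that $c$ is seen to be nonzero (and, should one wish to read off the direction of instability, to fix its sign). The passage from rational to arbitrary $v\in\kt$ is then immediate from linearity, and everything else in the argument is formal.
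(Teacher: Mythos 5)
Your proposal is correct and follows essentially the same route as the paper: K-semistability applied to a product configuration and its inverse, together with linearity, forces the Donaldson--Futaki invariant to vanish on product configurations; this is identified with a positive multiple of $\bfF_\xi(\Phi(a))$ (the paper cites Lemma 2.15 of \cite{TivC} for exactly the identity you flag as the main obstacle); and Lemma~\ref{lemmaCRIT} then yields criticality of $\bfH$ and the contrapositive statements. Your extra care with rational directions and the $\pm v$ trick is simply a more explicit rendering of the paper's appeal to linearity of the right-hand side of \eqref{DonFutINVdefn}.
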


Note that Donaldson's proof \cite{don:estimate,don:extMcond,don:csc} of the Donaldson--Tian--Yau conjecture for compact toric surfaces readily implies that the Sasakian K-semistability of Collins--Sz\'ekelyhidi ensures the existence of a compatible cscS metric. Indeed, every transversal geometrical object is translated as an object defined on a transversal labelled polytope \cite{contactNOTE,moi2} and the K--stability is also defined only using the labelled polytope data~\cite{don:csc}.  \\   

As suggested by an example constructed in~\cite{don:csc}, and contrary to the toric Sasaki-Einstein problem, it is generally unlikely that a Reeb vector field with vanishing Sasaki--Futaki invariant necessarily admits a compatible cscS metric. However, when all elements of the Sasaki cone can be represented by extremal Sasaki metrics, we have

\begin{theorem}\label{critcscS}
Suppose the Sasaki cone is exhausted by extremal Sasaki metrics and that the total transverse scalar curvature does not vanish. Then the set of critical points of the Einstein-Hilbert functional is precisely the set of rays in the Sasaki cone with constant scalar curvature. In particular, in this case a Sasakian structure $S=(\xi,\eta,\Phi,g)$ has constant scalar curvature if and only if $(Y,\xi)$ is K-semistable.
\end{theorem}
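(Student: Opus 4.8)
The plan is to prove Theorem~\ref{critcscS} by combining the characterization of critical points from Theorem~\ref{theoCRIT=zero} with Theorem~\ref{theoSTABcrit} and the hypothesis that every ray is represented by an extremal Sasaki metric. First I would unwind what the three hypotheses give us. By Theorem~\ref{theoCRIT=zero}, the critical set of $\bfH$ is the union of the zero locus of the Sasaki--Futaki invariant $\bfF_\xi$ and the zero locus of the total transversal scalar curvature $\bfS_\xi$. Since we assume $\bfS_\xi$ does not vanish on the Sasaki cone, the second locus is empty, so a ray is critical for $\bfH$ \emph{if and only if} its Sasaki--Futaki invariant vanishes identically. This reduces the theorem to two implications: (i) a critical ray (equivalently, a ray with vanishing $\bfF_\xi$) carries a cscS metric, and (ii) a cscS ray is critical, which is the ``in particular'' clause of Theorem~\ref{theoCRIT=zero} and needs no further argument.

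The heart of the matter is implication (i), and here is where the exhaustion hypothesis does the work. Fix a critical ray $\xi$; by hypothesis it admits a \emph{compatible extremal} Sasaki metric. I would recall the standard structure theory of extremal metrics: extremality means the transversal scalar curvature's gradient is a holomorphic (transversally holomorphic) vector field, equivalently the metric solves the Euler--Lagrange equation for the Calabi-type energy within its polarization. The Sasaki--Futaki invariant is precisely the obstruction measuring the projection of the transversal scalar curvature onto the space of potentials coming from the Sasaki cone (the Hamiltonian holomorphy potentials). The key classical fact I would invoke is that for an extremal Sasaki metric, the Sasaki--Futaki invariant $\bfF_\xi$ equals the pairing of a fixed element (the extremal vector field) against the test vector fields; vanishing of $\bfF_\xi$ identically then forces the extremal vector field to be trivial, i.e.\ the transversal scalar curvature is constant. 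Thus an extremal metric with $\bfF_\xi \equiv 0$ is automatically cscS.

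Assembling these pieces: a ray is critical for $\bfH$ $\iff$ $\bfF_\xi \equiv 0$ (using $\bfS_\xi \neq 0$) $\iff$ the extremal representative is cscS (using the extremality--Futaki dichotomy above) $\iff$ the ray carries constant scalar curvature. This proves the first assertion that the critical set equals the set of cscS rays. For the final ``if and only if'' with K-semistability, I would argue both directions: if $(Y,\xi)$ is K-semistable, then Theorem~\ref{theoSTABcrit} gives $\bfF_\xi \equiv 0$, hence by the chain above the extremal representative is cscS; conversely, a cscS metric makes $\xi$ a critical point of $\bfH$ by Theorem~\ref{theoCRIT=zero}, but to conclude K-semistability I would note that constant scalar curvature is known to imply K-semistability in this setting (this is the ``easy'' direction of the Yau--Tian--Donaldson correspondence, which holds generally, as opposed to the converse which is the hard open problem).

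The main obstacle I anticipate is making the extremality--Futaki step fully rigorous: I must be careful that ``the Sasaki--Futaki invariant vanishes identically'' (as a functional on the relevant space of vector fields, all of which lie in the Sasaki cone by the exhaustion hypothesis) genuinely forces the extremal vector field to vanish, rather than merely being orthogonal to a proper subspace. This requires that the extremal vector field itself lies in the span tested by $\bfF_\xi$ --- which is exactly guaranteed because the extremal vector field is built from the transversal scalar curvature potential, an element of the Sasaki cone's Lie algebra. A secondary delicate point is the converse direction for K-semistability: I should confirm that the cscS $\Rightarrow$ K-semistable implication is available in the Collins--Sz\'ekelyhidi framework cited, since the whole paper is organized around the harder reverse direction.
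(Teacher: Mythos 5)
Your proposal is correct and takes essentially the same route as the paper: the paper's proof is a one-line citation of Theorem~\ref{theoSTABcrit}, Corollary 1 of \cite{CollinsSz} (exactly the cscS $\Rightarrow$ K-semistable direction you flagged as needing confirmation, which is indeed available there), and Proposition 5.2 of \cite{BGS} (the fact that an extremal Sasaki metric with vanishing Sasaki--Futaki invariant is cscS, which you reprove via the extremal vector field argument). Your reduction of the critical set using Theorem~\ref{theoCRIT=zero} together with the hypothesis $\bfS_\xi\neq 0$, and your assembly of the two implications, match the intended argument.
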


We can apply this theorem directly to the $S^3_\bfw$-join manifolds $M_{l_1,l_2,\bfw}=M\star_{l_1,l_2}S^3_\bfw$ of \cite{BoToJGA}; however, as shown in Section \ref{sasadmsec} we can obtain a stronger result by direct computation. Recall that the join construction correspond to the product in the Sasaki category. The symmetries of $S^3_\bfw$ are then transfered to the join in some sense and the $S^3_\bfw$--join manifolds $M_{l_1,l_2,\bfw}$ inherits an important two dimensional subcone $\gt^+_\bfw$, called the $\bfw$-cone, of the Sasaki cone $\gt^+$ associated to a two dimensional Lie subalgebra $\gt_\bfw$ of the Lie algebra $\gt$ of a maximal torus lying in the automorphism group of a Sasakian structure. 

In fact, using the relation between $\bfH$ and the Sasaki-Futaki invariant, Lemma \ref{explicitSF} expresses the latter in terms of a rational function obtained explicitly from the $S^3_\bfw$-join construction. This allows us to determine if a ray $\gr_\xi$ in the $\bfw$-cone has constant scalar curvature regardless of whether the transverse scalar curvature vanishes or not. 

\begin{theorem}\label{stabcsc}
Let $M$ be a regular Sasaki manifold with constant transverse scalar curvature, and consider the $S^3_\bfw$-join $M_{l_1,l_2,\bfw}$. Its $\bfw$-cone $\gt^+_\bfw$ has a cscS ray $\gr_\xi$  if and only if the Sasaki-Futaki invariant $\bfF_\xi$ vanishes on the Lie algebra $\gt_\bfw\otimes \bbc$. Then for $\xi\in\gt^+_\bfw$ the polarized affine cone $(Y,\xi)$ associated to $M_{l_1,l_2,\bfw}$ is K-semistable if and only if the Sasakian structure $S=(\xi,\eta,\Phi,g)$ on $M_{l_1,l_2,\bfw}$ has constant scalar curvature (up to isotopy). 
\end{theorem}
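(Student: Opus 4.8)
The plan is to treat the two equivalences separately, reducing everything to the two–dimensional $\bfw$–cone $\gt^+_\bfw$, where the admissible construction of \cite{BoToJGA} gives complete control. Because $M$ is a regular Sasaki manifold with constant transverse scalar curvature, its transverse Kähler quotient is cscK and the admissible construction applies to the join $M_{l_1,l_2,\bfw}$; the structural input I would rely on is that this exhausts $\gt^+_\bfw$ by \emph{extremal} Sasaki metrics, with the associated extremal vector field lying in $\gt_\bfw$. Since $\gt^+_\bfw$ is two–dimensional and $\bfH$ is scale invariant, the problem becomes essentially one–variable, and Lemma~\ref{explicitSF} provides $\bfF_\xi$ restricted to $\gt_\bfw\otimes\bbc$ as an explicit rational function of the Reeb coordinates.

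For the first equivalence, that $\gr_\xi$ carries a cscS metric if and only if $\bfF_\xi$ vanishes on $\gt_\bfw\otimes\bbc$, the forward direction is immediate: if $\gr_\xi$ is cscS then the transverse scalar curvature is constant, so the Sasaki–Futaki invariant vanishes identically and in particular on $\gt_\bfw\otimes\bbc$. For the converse I would argue that the extremal representative on $\gr_\xi$ has scalar curvature equal to a Killing potential that is affine in the $\gt_\bfw$–moment map, and that its nonconstant part is measured exactly by $\bfF_\xi|_{\gt_\bfw\otimes\bbc}$; the vanishing hypothesis then forces this potential to be constant, i.e. the extremal metric is cscS. The explicit rational expression of Lemma~\ref{explicitSF} makes the locus $\{\bfF_\xi=0\}$ concrete and, crucially, lets me conclude \emph{without} assuming that the total transverse scalar curvature $\bfS_\xi$ is nonzero; this is precisely where the argument improves on a direct appeal to Theorem~\ref{critcscS}, whose hypothesis on $\bfS_\xi$ I want to dispense with.

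For the second equivalence I would combine the first part with the stability results already in hand. If $(Y,\xi)$ is K-semistable, then Theorem~\ref{theoSTABcrit} gives $\bfF_\xi\equiv0$, hence $\bfF_\xi$ vanishes on $\gt_\bfw\otimes\bbc$, and the first part produces a cscS metric on the ray $\gr_\xi$; since this metric is the admissible extremal representative, the given structure $S=(\xi,\eta,\Phi,g)$ is cscS up to isotopy. Conversely, if $S$ is cscS then its Reeb ray is a critical point of $\bfH$ by Theorem~\ref{theoCRIT=zero}, and K-semistability of the polarized cone $(Y,\xi)$ follows from the known implication that existence of a cscS metric forces K-semistability in the sense of Collins–Sz\'ekelyhidi~\cite{CollinsSz}; alternatively, on the locus where $\bfS_\xi\neq0$ one may invoke Theorem~\ref{critcscS} directly.

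The main obstacle is the converse of the first equivalence, namely converting the algebraic condition $\bfF_\xi|_{\gt_\bfw\otimes\bbc}=0$ into an honest cscS metric. This rests on two facts that must be checked against the specific join geometry: that the admissible construction genuinely exhausts $\gt^+_\bfw$ by extremal metrics, and that the extremal vector field stays inside $\gt_\bfw$, so that $\bfF_\xi$ evaluated on $\gt_\bfw\otimes\bbc$ detects the full deviation of the extremal scalar curvature from a constant. The delicate rays are those where $\bfS_\xi$ vanishes, which are invisible to Theorem~\ref{critcscS}; it is exactly the rationality furnished by Lemma~\ref{explicitSF} that lets me locate the zeros of $\bfF_\xi$ and handle these rays by direct computation.
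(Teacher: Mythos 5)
Your second equivalence is handled the same way as the paper (Theorem~\ref{theoSTABcrit} for one direction, Corollary 1 of \cite{CollinsSz} for the other), but the converse of your first equivalence has a genuine gap: it rests on the ``structural input'' that the admissible construction exhausts $\gt^+_\bfw$ by extremal Sasaki metrics, and this is false for the joins in question. The paper's own Example~\ref{criticalptsnoncsc} exhibits the failure concretely: on $M_{1,101,(3,2)}$ the rays with $b$ outside an interval approximately $(0.295,\,1.455)$ admit \emph{no} admissible extremal metrics (positivity of the extremal polynomial fails there). So you cannot start from ``the extremal representative on $\gr_\xi$'' and then argue via the Futaki--Mabuchi-type identification of $\bfF_\xi|_{\gt_\bfw\otimes\bbc}$ with the nonconstant part of the extremal potential; the extremal representative you need may simply not exist. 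This is precisely why the paper proves Theorem~\ref{stabcsc} by direct computation rather than by invoking Theorem~\ref{critcscS}, whose extremal-exhaustion hypothesis (in addition to $\bfS_\xi\neq 0$) is what you are implicitly reinstating.

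The repair is to replace the extremal-metric argument by Proposition~\ref{CSCexistence}, which you never invoke: by \cite{BoToJGA}, a ray determined by $b>0$ carries an admissible cscS metric (up to isotopy) \emph{if and only if} $f_{CSC}(b)=0$, with no extremality or nonvanishing hypothesis whatsoever. Combined with Lemma~\ref{explicitSF}, which identifies $\bfF_\xi(\Phi(H_2))$ with $f_{CSC}(b)$ up to a positive multiple (and whose proof is where the delicate continuity argument across the locus $\bfS_\xi=0$ actually lives, via Lemma~\ref{lemmaCRIT} and rationality of $\bfH$), and with Corollary~\ref{wfull} to pass from vanishing on $\gt_\bfw\otimes\bbc$ to identical vanishing, this gives the first equivalence on every ray, including those where $\bfS_\xi=0$ or where no extremal metric exists. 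Your instinct that rationality is what handles the $\bfS_\xi=0$ rays is correct, but the existence of the cscS metric itself must come from Proposition~\ref{CSCexistence}, not from an extremal representative.
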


In Section \ref{exsec} we give some examples of the manifolds $M_{l_1,l_2,\bfw}$. In particular, Example \ref{bothscal0csc} gives infinitely many contact structures on the two $S^3$-bundles over a Riemann surface of genus greater than one which have both vanishing transverse scalar curvature and Sasaki-Futaki invariant. The topology of the manifolds $M_{l_1,l_2,\bfw}$ has been studied in \cite{BoToJGA,BoToNY}. In particular, if $M$ is simply connected so is $M_{l_1,l_2,\bfw}$ and a method for describing the cohomology ring is given, and is computed in special cases. For example, the integral cohomology ring for Example \ref{tripleCSC} below is computed in \cite{BoToNY} for the $c_1(D)=0$ case ($l_1=1, l_2=w_1+w_2$).\\

Another application of Theorem~\ref{theoCRIT=zero} is to ensure the existence of a Reeb vector field for which the Sasaki--Futaki invariant vanishes identically in some case. 
\begin{theorem}\label{theoMINexist} Assume that the total transversal scalar curvature is sign definite and bounded away from $0$ on each transversal set of the Sasaki cone, then there exists at least one Reeb vector field for which the Sasaki--Futaki invariant vanishes identically.   
\end{theorem}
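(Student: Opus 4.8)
The plan is to deduce the statement from Theorem~\ref{theoCRIT=zero} by producing a critical point of the Einstein--Hilbert functional $\bfH$ on the Sasaki cone $\gt^+$. Since the total transversal scalar curvature $\bfS_\xi$ is assumed sign definite, it has no zeros on $\gt^+$; hence by Theorem~\ref{theoCRIT=zero} the critical set of $\bfH$ coincides with the zero set of the Sasaki--Futaki invariant $\bfF_\xi$. It therefore suffices to exhibit a single $\xi_\ast\in\gt^+$ with $d\bfH_{\xi_\ast}=0$, for such a $\xi_\ast$ will automatically satisfy $\bfF_{\xi_\ast}\equiv0$.

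To find this critical point I would argue variationally. Because $\bfH$ is invariant along the rays of $\gt^+$, I first replace $\gt^+$ by a slice transverse to the rays. Let $\varepsilon=\pm1$ be the sign of $\bfS$ and set $\Sigma=\{\xi\in\gt^+\mid\bfS_\xi=\varepsilon\}$; since $\bfS_\xi$ is homogeneous of degree $-n$ and nonvanishing, $\Sigma$ is smooth and meets every ray exactly once. On $\Sigma$ one has $\bfH=\varepsilon^{\,n+1}\bfV_\xi^{-n}$, so the critical points of $\bfH|_\Sigma$ are exactly the critical points of the volume functional $\bfV_\xi$ restricted to $\Sigma$; I would produce one by minimizing $\bfV_\xi$ over $\Sigma$. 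A critical point of the ray-invariant function $\bfH$ restricted to a slice transverse to the rays is automatically a critical point of $\bfH$ on all of $\gt^+$, because $d\bfH_{\xi_\ast}$ then vanishes both on $T_{\xi_\ast}\Sigma$ and, by homogeneity, on the ray direction $\xi_\ast$ itself, and these span $T_{\xi_\ast}\gt^+$. (Equivalently, writing $d\bfH$ in terms of $d\bfS$ and $d\bfV$ as in Lemma~\ref{lemmaCRIT} and combining the Lagrange condition $d\bfV_{\xi_\ast}=\lambda\,d\bfS_{\xi_\ast}$ with the two Euler relations $\langle d\bfV_{\xi},\xi\rangle=-(n+1)\bfV_\xi$ and $\langle d\bfS_\xi,\xi\rangle=-n\bfS_\xi$ forces $d\bfH_{\xi_\ast}=0$.)

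The main obstacle is to show that the infimum of $\bfV_\xi$ over $\Sigma$ is actually attained at an interior point of $\gt^+$. A minimizing sequence could in principle fail to converge in two ways: by approaching the boundary $\partial\gt^+$, or by escaping to infinity along $\Sigma$. The first possibility is ruled out by the result of Martelli--Sparks--Yau~\cite{reebMETRIC}, which gives that $\bfV_\xi$ is convex and proper, so $\bfV_\xi\to+\infty$ both as $\xi\to\partial\gt^+$ and as $\xi\to0$. The second possibility is the delicate one: writing $\xi=ru$ with $u$ a unit direction, a point of $\Sigma$ with $r\to\infty$ forces $\bfS_u=\varepsilon\,r^{n}$, so $|\bfS_u|\to\infty$ and hence $u\to\partial\gt^+$, whereupon $\bfV_\xi=r^{-(n+1)}\bfV_u$ becomes an indeterminate competition between $r^{-(n+1)}\to0$ and $\bfV_u\to+\infty$. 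Controlling this competition is exactly where the hypothesis that $\bfS$ is bounded away from $0$ on \emph{every} transversal set is needed: evaluating it on the slice $\{\bfV_\xi=1\}$ shows, via the scaling relations, that $|\bfH|$ is bounded below by a positive constant on all of $\gt^+$, and this uniform lower bound (applied on transversal sets adapted to each boundary stratum) is what must be leveraged to keep the constrained infimum strictly below its limiting values at infinity, forcing the minimizing sequence into a compact subset of $\gt^+$. Establishing this coercivity is the heart of the argument; granting it, the minimizer $\xi_\ast$ exists in the interior, the discussion above gives $d\bfH_{\xi_\ast}=0$, and Theorem~\ref{theoCRIT=zero} then yields $\bfF_{\xi_\ast}\equiv0$.
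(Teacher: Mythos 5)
Your reduction steps are sound --- ray-invariance of $\bfH$, the Euler identities, the fact that an interior critical point of $\bfH$ restricted to a transversal slice is a critical point of $\bfH$ on the whole cone, and the final appeal to Theorem~\ref{theoCRIT=zero} with $\bfS_{\xi_\ast}\neq 0$ --- and this is the same architecture as the paper's proof. The genuine error is the \emph{direction} of your optimization. On your slice $\Sigma=\{\bfS_\xi=\varepsilon\}$ the infimum of $\bfV$ is $0$ and is never attained, so the minimizer you propose to produce does not exist. Indeed, write $\xi=ru\in\Sigma$ with $|u|=1$; then $r=|\bfS_u|^{1/n}$ and
\begin{equation*}
\bfV_\xi=r^{-(n+1)}\bfV_u=\frac{\bfV_u}{|\bfS_u|^{(n+1)/n}}\leq\frac{\bfV_u}{\bigl(m_o\bfV_u\bigr)^{(n+1)/n}}=m_o^{-(n+1)/n}\,\bfV_u^{-1/n},
\end{equation*}
where $m_o>0$ comes from the hypothesis read, as the paper formalizes it, in the ratio form $|\bfS_u|\geq m_o\bfV_u$ on the unit-sphere transversal set (the bound $|\bfS_u|\geq c$ alone would be useless here, since then $|\bfH(u)|\geq c^{n+1}/\bfV_u^n\to 0$). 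As $u\to\partial\kt_k^+$, Martelli--Sparks--Yau properness gives $\bfV_u\to+\infty$, hence $\bfV_\xi\to 0$ along $\Sigma$. So the ``indeterminate competition'' you flag is not indeterminate: it resolves against you, your minimizing sequences escape to infinity along $\Sigma$, and the coercivity you defer as ``the heart of the argument'' is false. (Note also that no sequence in $\Sigma$ converges to a finite boundary point at all, since such sequences force $r\to\infty$; so the MSY properness you invoke to rule out your ``first possibility'' is vacuous for this slice.) Equivalently: minimizing $\bfV$ on $\{\bfS_\xi=\varepsilon\}$ is the same as maximizing $|\bfH|$, and under the hypothesis $|\bfH|$ is unbounded above on the Sasaki cone.

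The fix is to reverse the direction: maximize $\bfV$ on $\Sigma$, equivalently minimize $|\bfH|$. The computation above shows $\bfV\to 0$ at the degenerations of $\Sigma$, while your own observation (evaluating the hypothesis on $\{\bfV_\xi=1\}$) gives $|\bfH|\geq c>0$ globally and hence $\bfV=|\bfH|^{-1/n}\leq c^{-1/n}$ on $\Sigma$; so a maximizing sequence stays in a compact subset, the maximum is attained at an interior point, and the rest of your argument goes through at that point. This is exactly what the paper does, in a different normalization: it works on a relatively compact affine transversal slice $\Sigma_{\xi,H}$, where the hypothesis gives $|\bfH|\geq m_o^{n+1}\bfV$, and the MSY convexity/properness of $\bfV$ on that slice forces $|\bfH|\to+\infty$ at $\partial\Sigma_{\xi,H}$; hence $|\bfH|$ attains an interior minimum, which by homogeneity is a critical point of $\bfH$ on the cone, and Lemma~\ref{lemmaCRIT} together with $\bfS\neq 0$ yields $\bfF_{\xi_o}\equiv 0$.
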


This hypothesis is fulfilled on contact toric manifolds of Reeb type and when $c_1(D)=0$, but for these cases Theorem~\ref{theoMINexist} was already known \cite{FutakiOnoWang,moi2,reebMETRIC,MSYvolume}. 
 
 Finally, we study the second variation of the Einstein--Hilbert functional. This functional being invariant by rescaling cannot be convex in the usual sense. Let us say that it is {\it transversally convex} (respectively concave) at $\xi$ if for every variation $\xi+ta$ in the Sasaki cone 
 $$\frac{d^2}{dt^2}\bfH(\xi+ta)\geq 0 \;\;\; (\mbox{resp. } \leq 0)$$ 
 with equality if and only if $a\in\R\xi$. We get, as a consequence of a theorem of Matsushima, that $\bfH$ is convex (transversally to rescaling) near rays of Sasaki-$\eta$-Einstein metrics.    
 
 \begin{theorem}\label{theolocCVX}
  Let $(M,D,J,g, \xi)$ be either a cscS compact manifold of negative transverse scalar curvature or a compact Sasaki-$\eta$--Einstein manifold of positive transverse scalar curvature. Then $\bfH$ is transversally convex at $\xi$ (or transversally concave in the negative cscS case with $n$ odd). 
\end{theorem}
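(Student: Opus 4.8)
The plan is to compute the second variation of $\bfH$ at $\xi$ directly, using the explicit relation between the first variation of $\bfH$ and the Sasaki--Futaki invariant established in Lemma~\ref{lemmaCRIT}. Since $\xi$ is by hypothesis a critical point (either cscS, or Sasaki-$\eta$-Einstein, which is in particular cscS), the first-order term vanishes and the sign of $\frac{d^2}{dt^2}\bfH(\xi+ta)$ at $t=0$ is governed by the Hessian. Writing $\bfH = \bfS^{n+1}_\xi/\bfV^n_\xi$ and differentiating twice, the critical-point condition kills all cross terms, so I expect the Hessian to reduce to a multiple (with an explicit sign depending on $n$ and on the sign of $\bfS_\xi$) of a quadratic form built from the second variations of $\bfS_\xi$ and $\bfV_\xi$ separately. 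The first reduction is therefore bookkeeping: expand $\frac{d^2}{dt^2}\left(\bfS^{n+1}/\bfV^n\right)$ at a critical point and collect terms.

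The substantive step is to identify that quadratic form. First I would recall the known formulas for the variations of the transverse scalar curvature and volume along a deformation $\xi + ta$ of the Reeb field inside the Sasaki cone; the variation of the volume functional is the object studied by Martelli--Sparks--Yau, and the variation of $\bfS_\xi$ can be written in terms of transversal data. The goal is to reorganize the resulting quadratic form into something recognizably definite. Here is where I would invoke the theorem of Matsushima: on a cscS manifold of nonzero transverse scalar curvature (and in particular on a Sasaki-$\eta$-Einstein manifold), the relevant Lie algebra of transverse holomorphic vector fields is reductive and the potential functions generating the deformations decompose in a way that makes the Hessian a weighted $L^2$-type inner product on basic functions orthogonal to the constants. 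The quotient by $\R\xi$ precisely removes the constant (rescaling) direction, which is the unique null direction, yielding strict definiteness transverse to rescaling.

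The sign is then dictated by the combination of three inputs: the overall sign of the prefactor coming from $\bfS^{n+1}_\xi/\bfV^n_\xi$ at a critical point, the sign of the transverse scalar curvature, and the parity of $n$ (which controls the sign of the power $n+1$ relative to $n$). I would track these carefully to land on convexity in the Sasaki-$\eta$-Einstein positive case and in the negative cscS case for $n$ even, and concavity in the negative cscS case for $n$ odd, matching the statement.

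The main obstacle I anticipate is the definiteness argument rather than the differentiation. Definiteness of the second variation of $\bfV_\xi$ alone is the Martelli--Sparks--Yau convexity result, but once the scalar-curvature term is added the combined form is not manifestly definite, and one must use the cscS (or $\eta$-Einstein) hypothesis in an essential way---via Matsushima's reductivity and the attendant eigenvalue/spectral-gap information on the transverse Laplacian---to control the cross-interaction and conclude strict definiteness modulo $\R\xi$. Establishing that the only null direction is the rescaling direction, and that no other degeneracy sneaks in, is the delicate point; everything else is a controlled computation.
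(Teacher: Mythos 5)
Your outline---compute the second variation of $\bfH$ at the critical point, then prove definiteness of the resulting quadratic form---is indeed the paper's route (Lemma~\ref{2ndvar}, Corollary~\ref{coroCSCSlocal2ndVAR}, Lemma~\ref{conlemm}). But your key definiteness step contains a genuine error. After normalizing $\eta(a)$ to have mean zero (this is allowed because $\eta(a)$ is a Hamiltonian; it is this normalization, not the critical-point condition, that kills the extra square term in Corollary~\ref{coroCSCSlocal2ndVAR}), the Hessian at a cscS structure is
\begin{equation*}
\frac{d^2}{dt^2}_{t=0}\bfH(\xi_t)\;=\;n(n+1)(2n+1)(s^T_g)^n\Bigl(\|d(\eta(a))\|^2_g-\tfrac{s^T_g}{2n+1}\|\eta(a)\|^2_g\Bigr).
\end{equation*}
In the negative case this is already \emph{manifestly} definite---both terms inside the parenthesis are nonnegative and vanish only for $a\in\R\xi$---with overall sign governed by $(s^T_g)^n$, i.e.\ by the parity of $n$. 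No reductivity and no spectral gap are needed there, contrary to your claim that ``the combined form is not manifestly definite.''

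The real problem is the positive case. You assert that Matsushima reductivity on a cscS manifold of nonzero transverse scalar curvature (``in particular'' Sasaki-$\eta$-Einstein) yields the spectral gap. Reductivity of the transverse automorphism algebra does hold for every cscS structure, but it does \emph{not} imply the inequality $\grl_1>\frac{s^T_g}{2n+1}$ that definiteness requires; indeed this inequality genuinely fails for some positive cscS structures: Example~\ref{tripleCSC} of the paper exhibits a positive cscS metric with $\bfH''(\xi)<0$. So any argument using only ``cscS plus reductivity'' would prove a false statement. What is actually needed---and what the paper uses---is the transverse K\"ahler--Einstein hypothesis: either via Tanno's transverse homothety to a Sasaki--Einstein structure with $s^T_g=4n(n+1)$ combined with the Lichnerowicz bound $\grl_1\geq 2n+1$ (and scale invariance of $\bfH$), or via Matsushima's eigenvalue estimate $\grl_1\geq\frac{s^T_g}{2n}$, which is valid for positive transverse K\"ahler--Einstein metrics but not for general positive cscS. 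Either bound beats $\frac{s^T_g}{2n+1}$. This is precisely why the theorem's hypotheses are asymmetric---cscS suffices when $s^T_g<0$, while $\eta$-Einstein is required when $s^T_g>0$---and your proposal erases exactly that distinction, which is where it breaks.
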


\begin{rem} In the Sasaki-Einstein case, our result here can be compared with the convexity results in \cite{MSYvolume}.
\end{rem}

 In Section~\ref{secBG} we recall the basic notions of Sasaki geometry needed later. In Section~\ref{secEH} we prove Lemma~\ref{lemmaCRIT} from which follows Theorem~\ref{theoCRIT=zero} and compute the second variation of $\bfH$. In Section~\ref{secAppli} we apply Lemma~\ref{lemmaCRIT} to prove Theorems~\ref{theoSTABcrit},\ref{critcscS},\ref{theoMINexist},\ref{theolocCVX}. In Section \ref{joinadd} we explicitly compute the Einstein-Hilbert functional and thus the Sasaski-Futaki invariant in terms of rational functions, Lemmas \ref{Hwcone} and \ref{explicitSF}, for the $S^3_\bfw$ join construction of \cite{BoToJGA} which proves Theorem \ref{stabcsc}. We then apply our computations to several examples. 
 
\begin{ack}
This work has benefited from a visit by the third and fourth authors to the University of New Mexico to whom they thank for support and hospitality. The authors also thank Vestislav Apostolov, Tristan Collins, Gabor Sz\'ekelyhidi, and Craig van Coevering for helpful discussions.
\end{ack}

\section{Background}\label{secBG}
\subsection{The Sasaki cone}
We give in this section the basic definitions and facts we need for our purposes. We keep the notation of~\cite{BG:book} and we refer to it for an extensive study of Sasakian geometry.

A Sasakian manifold of dimension $2n+1$ is a Riemannian manifold $(M,g)$ together with a co-oriented dimension $1$ contact distribution $D$ such that the cone metric $\hat{g}$ induced by $g$ on $C(M)$, a connected component of the annihilator, $D^0$, of $D$ in $T^*M$, is K\"ahler with respect to the symplectic structure $\omega$ coming from the inclusion $C(M)\subset T^*M$. Note that we have an inclusion $\iota_g:M\hookrightarrow C(M)$ determined by $S_g(T^*M)\cap C(M) \simeq M$ where $S_g(T^*M)$ is the set of covectors of norm $1$. Usually, one identifies $$\left(C(M)= M\times \R_{>0}, \, \hat{g}=dr\times dr +r^2g,\, \omega=\frac{1}{2}dd^cr^2\right)$$ by defining the map $r: C(M)\ra \R_{>0}$ as $r^2_p=\hat{g}_p(y,y) = g_{\pi(p)}(p,p)$ where $y$ is the vector field induced by dilatation along the fibers in $C(M)\subset T^*M$ and $\pi: T^*M\ra M$.

The integrable complex structure $\hat{J}$ on $Y=C(M)$ determines a CR-structure $(D,J)$ on $M$ as $D= T\iota_g(M)\cap \hat{J}(T\iota_g(M))$ and a Reeb vector field $\xi=\hat{J}y$ which pull back as a Killing vector field on $(M,g)$ transverse to the distribution $D$.    

Let $(M,g,\xi, D)$ be a Sasakian manifold of dimension $2n+1$, where $g$ denotes the Riemannian metric, $\xi$ the Reeb vector field and $D$ the contact structure. The Sasakian structure is also determined by the CR-structure $(D,J)$ together with the contact form $\eta\in \Omega^1(M)$ so that $\eta(\xi)=1$, $\mL_{\xi}\eta =0$, $\ker \eta = D$ and\footnote{In this equation we use the convention in \cite{BGS} rather than \cite{BG:book} since it coincides with the usual convention in K\"ahler geometry.}
$$g= d\eta(\cdot,\Phi(\cdot)) + \eta\otimes \eta$$ 
where $\Phi\in \Gamma(\mbox{End(TM)})$ is defined as $\Phi(\xi)=0$ and $\Phi_{|_{D}}=J$. 

 The Reeb vector field $\xi$ lies in the Lie algebra $\mathfrak{cr}(D,J)$ of $CR$--diffeomorphism $CR(D,J)$. Recall that the space of Sasaki structures sharing the same CR-structure, denoted $\Sas(D,J)$, is in bijection with the cone of Reeb vector fields $\mathfrak{cr}^+(D,J)= \{X \in \mathfrak{cr}(D,J)\,|\, \eta (X)>0\}$. The map $\mathfrak{cr}^+(D,J)\ra \Sas(D,J)$ is given by \begin{equation}\label{defnSasakiCONE}
\xi' \mapsto \left(\frac{\eta}{\eta(\xi')},D,J\right).                                                                                                                                                                                                                                                                                                                                                                                              \end{equation}

From~\cite{BG:book} we know that $\mathfrak{cr}^+(D,J)$ is an open convex cone in $\mathfrak{cr}(D,J)$, invariant under the adjoint action of $\mathfrak{CR}(D,J)$. Moreover, the following result will be useful for our study. 

\begin{theorem}\cite{BGS} Let $M$ be a compact manifold of dimension $2n + 1$ with a
CR-structure $(D, J)$ of Sasaki type. Then the Lie algebra $\mathfrak{cr}(D,J)$ decomposes as
$\mathfrak{cr}(D,J)=\kt_k +\mathfrak{p}$, where $\kt_k$ is the Lie algebra of a maximal torus $T_k$ of dimension $k$
with $1 \leq k \leq n + 1$ and $\mathfrak{p}$ is a completely reducible $\kt_k$--module. Furthermore, every
$X \in \mathfrak{cr}^+(D,J)$ is conjugate to a positive element in the Lie algebra $\kt_k$. 
\end{theorem}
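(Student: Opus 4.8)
The plan is to identify $\mathfrak{cr}(D,J)$ with the Lie algebra of the automorphism group $CR(D,J)$ and to read off its structure from the rigidity of strictly pseudoconvex CR geometry combined with compact Lie theory. Since $(D,J)$ is of Sasaki type the Levi form is definite, so $(M,D,J)$ is a compact strictly pseudoconvex CR manifold. First I would invoke the Tanaka--Chern--Moser theory: such a structure carries a canonical Cartan connection modeled on the CR sphere, so a CR-automorphism is determined by a finite jet at a point, and $CR(D,J)$ is a Lie group of dimension at most $(n+2)^2-1$, with equality precisely for the standard sphere $S^{2n+1}$. In particular $\mathfrak{cr}(D,J)$ is finite dimensional, and one has the dichotomy of Schoen and Webster: either $(M,D,J)$ is CR-equivalent to the round sphere, or $CR(D,J)$ is a compact Lie group.

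Next I would produce the decomposition. Fix a Reeb field $\xi\in\mathfrak{cr}^+(D,J)$; its flow preserves $(D,J)$, and the closure of the one-parameter group it generates is a torus of dimension at least one, which I enlarge to a maximal torus $T_k\subset CR(D,J)$ with Lie algebra $\kt_k$. Because $T_k$ is compact, its adjoint action on $\mathfrak{cr}(D,J)$ is completely reducible, so there is a $\kt_k$-invariant splitting $\mathfrak{cr}(D,J)=\mathfrak{z}(\kt_k)\oplus\mathfrak{p}$, where $\mathfrak{z}(\kt_k)$ is the zero-weight (centralizer) subspace and $\mathfrak{p}$ is the sum of the non-trivial isotypic components, visibly a completely reducible $\kt_k$-module. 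The point to verify is that $\mathfrak{z}(\kt_k)=\kt_k$, i.e.\ that the maximal torus is self-centralizing: in the compact alternative this is the statement that $\kt_k$ is a Cartan subalgebra, while in the sphere case one reads it off the real form $\mathfrak{su}(n+1,1)$ with $T_k$ a maximal torus of the maximal compact subgroup $S(U(n+1)\times U(1))$.

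For the dimension bound I would pass to the symplectic cone $Y=C(M)$, of real dimension $2(n+1)$: the torus $T_k$ lifts to an effective action there that preserves the Liouville form (as $\omega$ is exact), hence is Hamiltonian, and an effective Hamiltonian torus action on a symplectic manifold of dimension $2(n+1)$ has rank at most $n+1$, giving $k\le n+1$; the bound $k\ge 1$ holds because $\xi$ already generates a torus. For the conjugacy statement, each $X\in\mathfrak{cr}^+(D,J)$ is the Reeb field of the Sasaki structure it determines, so $X$ is central in the Lie algebra $\mathfrak{g}_X$ of the (compact) automorphism group $G_X$ of that structure; hence $X$ lies in a maximal torus of $G_X$, which sits inside a maximal compact subgroup of $CR(D,J)$. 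By Cartan's conjugacy theorems all maximal compact subgroups, and all maximal tori inside them, are conjugate, so some $\mathrm{Ad}_g$ carries $X$ into $\kt_k$; since $\mathfrak{cr}^+(D,J)$ is invariant under the adjoint action (as recalled above), the image stays positive.

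The main obstacle is the second alternative of the dichotomy, namely establishing the compactness (hence reductivity) of $CR(D,J)$ off the flat model, together with the verification that the zero-weight space of $T_k$ is exactly $\kt_k$ in the non-compact real form. This is precisely where genuine CR-geometric input is required rather than formal Lie theory, and it is the step that simultaneously pins down the extremal value $k=n+1$ in the sphere case and the absence of a trivial summand in $\mathfrak{p}$.
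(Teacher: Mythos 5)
This theorem is quoted in the paper from \cite{BGS} without proof, so there is no in-paper argument to compare against; your proposal, however, correctly reconstructs the route taken in the cited source: the Tanaka--Chern--Moser Lie group structure on $CR(D,J)$ and the Schoen--Webster dichotomy (compactness of $CR(D,J)$ unless $(M,D,J)$ is the standard CR sphere, where one works in $\mathfrak{su}(n+1,1)$), followed by formal compact Lie theory --- complete reducibility under the maximal torus, self-centralizing property of $\kt_k$ (Cartan subalgebra in the sphere case), Cartan conjugacy of maximal compacts and their maximal tori for the positivity-preserving conjugation --- and the rank bound $k\le n+1$ via the effective Hamiltonian torus action on the $2(n+1)$-dimensional symplectic cone. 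The items you flag at the end as the ``main obstacle'' (properness/compactness off the flat model and the identification of the zero-weight space in the noncompact real form) are not gaps but exactly the published results that \cite{BGS} also invoke, so your outline is sound and essentially the same proof.
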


The {\it Sasaki cone} is the set $\kt_k^+=\kt_k \cap  \mathfrak{cr}^+(D,J)$. When seeking extremal or csc Sasaki metrics one deforms the contact structure by $\eta\mapsto \eta +d^c\varphi$ where the function $\varphi$ is invariant under the maximal torus $T_k$. Thus, the Sasaki cone is associated with an isotopy class of contact structures of Sasaki type that is invariant under $T_k$.

\subsection{The Sasaki-Futaki invariant} One can consider the class $\Sas(\xi)$ of Sasakian structures having the same Reeb vector field. 
Let $L_\xi$ be the line bundle having $\xi$ as a section. The inclusion $L_{\xi} \hookrightarrow TM$ induces a sequence of bundle morphisms $$0\ra L_{\xi} \hookrightarrow TM \ra Q_\xi \ra 0.$$ For any CR-structure $(D,J)$ in $\Sas(\xi)$, the restriction $D\ra Q_\xi$ is an isomorphism and provides a complex structure $\bar{J}$ on $Q_\xi$. One can consider the subclass of structures $\Sas(\xi,\bar{J})$ making the following diagram commutes.
   \begin{align}
    \begin{split}
       \xymatrix{\relax
           & TM \ar[d]_{\Phi} \ar[r] &  Q_{\xi}\ar[d]_{\bar{J}}\\
         &TM \ar[r] &  Q_{\xi}
       }
   \end{split}
  \end{align} With that comes a natural notion of transversal holomorphic vector fields $\mathfrak{h}(\xi,\bar{J})$ see~\cite{BGS}.  
  
For a given Sasakian manifold, $(M,g,\xi, D,J)$, the transversal K\"ahler geometry refers to the geometry of $(D,J,g_{|_D})$. More precisely, $M$ is foliated by the Reeb flow. So there are local submersions $\pi_\alpha : U_\alpha \ra V_\alpha$, where $U_\alpha$ and $V_\alpha$ are open subsets of $M$ and $\bC^n$ respectively, such that $\pi_\alpha^* i= \Phi$. In particular, $d\pi_\alpha : (D_{|_{U_\alpha}},J) \ra (TV_{\alpha}, i)$ is an isomorphism and the Sasaki metric is sent to a K\"ahler structure on $V_\alpha$ with a connection $\nabla_\alpha^T$ and curvatures $R_\alpha^T$, $Ric_\alpha^T$, $\rho^T_\alpha$ $s^T_\alpha$...  Since, $\pi_\alpha^*\nabla_\alpha^T$ and $\pi_\beta^*\nabla_\beta^T$ coincide on $U_\alpha\cap U_\beta$, these objects patch together to define global objects on $M$, the {\it transversal} connection and curvatures $\nabla^T$, $R^T$, $Ric^T$, $\rho^T$, $s^T$... See ~\cite{BG:book,FutakiOnoWang} for more details. These tensors are basic, notably the transversal Ricci form $ \rho^T$ 
satisfies $$ \rho^T(\xi,\cdot) =0, \qquad \mL_\xi \rho^T=0$$ and lies in the basic first Chern class $2\pi c^B_1(\calf_\xi)$.    

Since the exterior derivative preserves this condition, the graded algebra of basic forms is a sub-complex of the de Rham complex. Moreover, one can define the basic exterior derivative $d_B$ as the restriction of the differential to these forms, its adjoint $\delta_B$ and the basic Laplacian $\Delta_B=d_B\delta_B+\delta_Bd_B$. The Hodge Theorem holds for the basic cohomology in this context; for a Sasaki metric $g$ there exist a unique basic function $\psi_g$ (of mean value $0$) such that $$\rho^T = \rho^T_H + i\del\overline{\del} \psi_g$$ where $\rho^T_H$ is $\Delta_B$--harmonic. Note that $$\Delta_B\psi_g = s^T - \frac{{\bfS}_{\xi}}{\bfV_\xi}$$ where $\bfV_\xi$ is the volume of $(M,g)$, the volume form is $dv_\xi= \eta\wedge(d\eta)^n$ and $\bfS_{\xi}$ is the total transversal scalar curvature. The volume of the Sasakian manifold $(M,D,J,\xi)$ does not depend on the chosen structure in $\Sas(\xi)$, see~\cite{BG:book} and the total transversal scalar curvature does not depend on the chosen structure in $\Sas(
\xi,\overline{J})$, see~\cite{FutakiOnoWang}.    

Fixing a CR-structure $(D,J)$ on $M$, the Sasaki--Futaki invariant of a Reeb vector field $\xi$ is the map $ \bfF_\xi : \mathfrak{cr}(D,J) \longrightarrow \R$ defined by
%    
% $$    \begin{array}{rccc}
%            \mathfrak{F}_\xi : & \mathfrak{cr}(D,J) &\longrightarrow &\R \\
%            & X & & 
%    \end{array}$$
$$\bfF_\xi(X)= \int_MX.\psi_g dv_g.$$ In fact, $\bfF_\xi$ is more naturally defined on $\mathfrak{h}(\xi,\overline{J})$ and does not depend on the chosen structure in $\Sas(\xi,\overline{J})$, see~\cite{BGS,FutakiOnoWang}, but in this note, to ease the computations we often fix a CR-structure. 

We have $\bfF_\xi([X,Y])=0$, $\bfF_\xi(\xi)=0$ and whenever there is a compatible constant scalar curvature Sasakian metric in $\Sas(\xi,\overline{J})$ then $$\bfF_\xi \equiv 0.$$ 
In order to simplify the notation, we omit explicit reference to $\bar{J}$ in the notation of $\bfF_\xi$ or $\bfS_\xi$ even though they depend on that structure.

 \section{The Einstein--Hilbert functional}\label{secEH}
  \subsection{The first variation}\label{ss1stVAR}
  We define the Einstein--Hilbert functional \begin{equation} \bfH(\xi) = \frac{\bfS^{n+1}_{\xi}}{\bfV_\xi^n}\end{equation} as a functional on the Sasaki cone. Note that $\bfH$ is homogeneous since the rescaling $\xi\mapsto \frac{1}{\lambda}\xi$ gives $$\lambda^{n+1}dv_g\mbox{ and } \frac{1}{\lambda}s^T_g.$$
  
The following lemma clarifies the link between the Einstein--Hilbert functional and the Sasaki-Futaki invariant.

  \begin{lemma}\label{lemmaCRIT} Given $a\in T_\xi\kt_k^+$, we have $$d\bfH_\xi (a) = \frac{n(n+1)\bfS^{n}_{\xi}}{\bfV_\xi^n} \bfF_{\xi}(\Phi(a)).$$ If $\bfS_{\xi} =0$ then $d\bfS_{\xi}= n \bfF_{\xi}(\Phi(a)).$ 
\end{lemma}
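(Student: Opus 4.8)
The plan is to reduce the statement to the separate first variations of the total transverse scalar curvature $\bfS_\xi$ and the volume $\bfV_\xi$, and then to identify the resulting combination with the Sasaki--Futaki invariant. Differentiating $\bfH=\bfS_\xi^{\,n+1}\bfV_\xi^{-n}$ gives
\[ d\bfH_\xi(a)=(n+1)\bfS_\xi^{\,n}\bfV_\xi^{-n}\,d\bfS_\xi(a)-n\,\bfS_\xi^{\,n+1}\bfV_\xi^{-n-1}\,d\bfV_\xi(a), \]
so everything follows once the two variations are in hand. For the deformation $\xi_t=\xi+ta$ with $(D,J)$ fixed, the new contact form is $\eta_t=\eta/\eta(\xi_t)=\eta/(1+tf)$ with $f:=\eta(a)$, whence $\dot\eta:=\tfrac{d}{dt}\big|_0\eta_t=-f\eta$ and $\tfrac{d}{dt}\big|_0 d\eta_t=-df\wedge\eta-f\,d\eta$.

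First I would compute $d\bfV_\xi$ directly from $dv_\xi=\eta\wedge(d\eta)^n$: differentiating and discarding every term carrying two factors of $\eta$ (which vanish), the $\dot\eta$ contribution and the $n$ copies of $\tfrac{d}{dt}|_0 d\eta_t$ combine to $-(n+1)f\,\eta\wedge(d\eta)^n$, so $d\bfV_\xi(a)=-(n+1)\int_M\eta(a)\,dv_\xi$ (the Martelli--Sparks--Yau variation). Next I would turn $\bfF_\xi(\Phi(a))$ into an explicit integral. Since $a\in T_\xi\kt_k^+\cong\kt_k$ commutes with $\xi$, the function $f=\eta(a)$ is basic, $\mathcal L_a\eta=0$, and the horizontal part $a_D=a-f\xi$ is the $d\eta$--Hamiltonian of $f$, i.e. $\iota_{a_D}d\eta=-df$; hence $\Phi(a)=\Phi(a_D)=-\nabla^T f$. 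Plugging into $\bfF_\xi(X)=\int_M X.\psi_g\,dv_\xi$ and integrating by parts against the Hodge potential $\psi_g$ (using $\Delta_B\psi_g=s^T-\bfS_\xi/\bfV_\xi$) yields, with the paper's conventions,
\[ \bfF_\xi(\Phi(a))=-\int_M f\,\Delta_B\psi_g\,dv_\xi=\frac{\bfS_\xi}{\bfV_\xi}\int_M\eta(a)\,dv_\xi-\int_M\eta(a)\,s^T\,dv_\xi. \]

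The main work, and the step I expect to be the real obstacle, is the variation of $\bfS_\xi$. Using the pointwise Kähler identity $s^T\,dv_\xi=2n\,\eta\wedge\rho^T\wedge(d\eta)^{n-1}$ I would write $\bfS_\xi=2n\int_M\eta\wedge\rho^T\wedge(d\eta)^{n-1}$ and differentiate; the $\dot\eta$ and $\tfrac{d}{dt}|_0 d\eta_t$ contributions (again killing double-$\eta$ terms) assemble into $-n\int_M f\,s^T\,dv_\xi$, leaving
\[ d\bfS_\xi(a)=-n\int_M\eta(a)\,s^T\,dv_\xi+2n\int_M\eta\wedge\dot\rho^T\wedge(d\eta)^{n-1}. \]
The crux is to show the last integral vanishes. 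The subtlety is that the foliation $\mathcal F_{\xi_t}$ itself moves, so one must argue that the transverse complex structure $J|_D$ is unchanged and compute $\dot g^T=-f\,g^T$ from $g^T_t(\cdot,\cdot)=d\eta_t(\cdot,J\cdot)|_D$. The fixed-$J$ Ricci variation then gives $\dot\rho^T=-\,i\partial\overline\partial(\mathrm{tr}_{g^T}\dot g^T)=n\,i\partial\overline\partial f$, a basic $i\partial\overline\partial$-exact form, and integration by parts shows $\int_M\eta\wedge i\partial\overline\partial f\wedge(d\eta)^{n-1}$ is a constant multiple of $\int_M\Delta_B f\,dv_\xi=0$. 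Hence $d\bfS_\xi(a)=-n\int_M\eta(a)\,s^T\,dv_\xi$. Justifying the Ricci-form variation across the moving foliation, rather than the clean form-bookkeeping, is the one point requiring genuine care; the radial direction $a=\xi$ (where $f\equiv1$, $\dot\rho^T=0$, and $d\bfS_\xi(\xi)=-n\bfS_\xi$) serves as a reliable consistency check.

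Finally I would substitute the two variations into the quotient-rule expression. Combining the formulas for $d\bfS_\xi$ and $d\bfV_\xi$ gives
\[ d\bfH_\xi(a)=n(n+1)\bfS_\xi^{\,n}\bfV_\xi^{-n}\Big(\tfrac{\bfS_\xi}{\bfV_\xi}\!\int_M\!\eta(a)\,dv_\xi-\!\int_M\!\eta(a)\,s^T\,dv_\xi\Big)=\frac{n(n+1)\bfS_\xi^{\,n}}{\bfV_\xi^{\,n}}\,\bfF_\xi(\Phi(a)), \]
by the Futaki identity established above, which is the first assertion. The second assertion is the special case $\bfS_\xi=0$: there $\bfF_\xi(\Phi(a))=-\int_M\eta(a)\,s^T\,dv_\xi=\tfrac1n\,d\bfS_\xi(a)$, i.e. $d\bfS_\xi=n\,\bfF_\xi(\Phi(a))$.
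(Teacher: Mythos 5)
Your overall reduction is sound, and two of its three ingredients are correct and essentially identical to the paper's: the volume variation $d\bfV_\xi(a)=-(n+1)\int_M\eta(a)\,dv_g$, and the identification $\bfF_\xi(\Phi(a))=-\int_M\eta(a)\,\Delta_B\psi_g\,dv_g$ via $\Phi(a)=-\nabla^T\eta(a)$ (this is exactly equivalent to the chain of integrations by parts in the paper, which uses $\mL_a\eta=0$ and $d\eta(a,\cdot)=g(\Phi(a),\cdot)-\eta(\Phi(a))\eta(\cdot)$). The final assembly and the $\bfS_\xi=0$ case are also fine. The gap is precisely where you flagged it, and it is genuine: your pointwise formula $\dot\rho^T=-i\del\overline{\del}\,(\mbox{tr}_{g^T}\dot g^T)=n\,i\del\overline{\del}\,\eta(a)$ is false whenever $a$ is not proportional to $\xi$. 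Indeed, differentiating the identity $\iota_{\xi_t}\rho^T_t=0$ at $t=0$ gives $\iota_\xi\dot\rho^T=-\iota_a\rho^T$, which is a nonzero horizontal $1$-form in general (for a transverse K\"ahler--Einstein structure $\rho^T=\kappa\, d\eta$ it equals $\kappa\, d(\eta(a))$), whereas the basic form $n\,i\del_B\overline{\del}_B\,\eta(a)$ is annihilated by $\iota_\xi$. The missing terms come from the fact that the foliation $\calf_{\xi_t}$ moves with $t$, so the local leaf spaces and the basic operators $\del_B,\overline{\del}_B$ move as well; the fixed-complex-structure K\"ahler variation formula you invoke simply does not apply, and your consistency check $a=\xi$ is blind to this, since that is the one direction in which the foliation does not move. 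It is true that terms of the form $\eta\wedge\gamma$ in $\dot\rho^T$ are killed inside $\int_M\eta\wedge\dot\rho^T\wedge(d\eta)^{n-1}$, but you have not shown that the horizontal part of $\dot\rho^T$ differs from $n\,i\del_B\overline{\del}_B\,\eta(a)$ only by such terms. So the vanishing of that integral, which is the entire content of the step (it is equivalent to the formula $d\bfS_\xi(a)=-n\int_M\eta(a)s^T dv_g$ you are trying to prove), is asserted rather than established.

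For contrast, the paper avoids the transverse geometry of a moving foliation altogether: it varies the honest Riemannian metrics $g_t$ on the fixed manifold $M$, writes $s_{g_t}=s^T_{g_t}-2n$, and invokes the standard Riemannian formula $\int_M\dot s_g\,dv_g=-\int_M g(\dot g,Ric_g)\,dv_g$ from Besse; the only remaining work is the pointwise linear algebra showing $g(\dot g,Ric_g)=-\eta(a)\bigl(s_g+2n\bigr)$, using $Ric(\xi,\xi)=2n$ and $g(b,Ric_g)=0$ for the off-diagonal tensor $b$ in $\dot g=-\eta(a)g-\eta(a)\eta\otimes\eta+b$. If you want to salvage your K\"ahler-variation route, the clean fix is to work on the cone $(Y,\hat J)$, where the complex structure genuinely is fixed as the Reeb field varies in the Sasaki cone, so that $\dot{\hat\rho}=-i\del\overline{\del}(\,\cdot\,)$ is legitimate; one then descends to $M$ using the relation between $\hat\rho$ and $\rho^T-(n+1)d\eta$. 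As written, however, the crux of your proof is missing.
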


 Theorem~\ref{theoCRIT=zero} is a consequence of Lemma~\ref{lemmaCRIT} together with the facts that $\mathfrak{cr}^+(D,J)$ is open in $\mathfrak{cr}(D,J)$, that any element in $\mathfrak{cr}^+(D,J)$ is conjugate to an element of $\kt_k^+$ and that $\bfF_\xi([X,Y])=0$.

\begin{proof}[Proof of Lemma~\ref{lemmaCRIT}]
 Given a path $\xi_t$ in $\kt_k^+$ such that $\xi_0=\xi$, we denote $$\eta_t=\frac{\eta}{\eta(\xi_t)}, \; \Phi_t= \Phi - \Phi\xi_t \otimes \eta_t\; \mbox{ and } $$ 
  \begin{equation}
  \begin{split}
 g_t &= d\eta_t (\Phi_t(\cdot),\cdot) +\eta_t\otimes\eta_t\\
 &= \frac{d\eta(\Phi_t(\cdot),\cdot)}{\eta(\xi_t)} -\frac{d\eta(\xi_t)\wedge  \eta}{\eta(\xi_t)^2}(\Phi_t(\cdot),\cdot)+\eta_t\otimes\eta_t.
 \end{split}
\end{equation} 

 Put $a=\dot{\xi}=(\frac{d}{dt}\xi_t)_{|_{t=0}}$, so $\dot{\eta}= -\eta(a)\eta$, $\dot{\Phi} = -\Phi a \otimes \eta$ and 
  \begin{equation*}
  \begin{split}
   \frac{d}{dt}g_t = & \frac{d\eta(\cdot,\dot{\Phi_t}(\cdot))}{\eta(\xi_t)} - \eta(\dot{\xi_t}) \frac{d\eta(\cdot,\Phi_t(\cdot))}{\eta(\xi_t)^2} - \frac{d\del_t(\eta(\xi_t))\wedge  \eta}{\eta(\xi_t)^2}(\cdot,\Phi_t(\cdot))\\
   &- \frac{d(\eta(\xi_t))\wedge  \eta}{\eta(\xi_t)^2}(\cdot,\dot{\Phi_t}(\cdot)) +2\del_t(\eta(\xi_t))\frac{d(\eta(\xi_t))\wedge  \eta}{\eta(\xi_t)^3}(\cdot,\Phi_t(\cdot))\\
   &\qquad - 2\frac{\del_t(\eta(\xi_t))}{\eta(\xi_t)^3}\eta\otimes \eta.
  \end{split}
\end{equation*}
Now, using the fact that $\eta(\xi_0)=1$ (and, thus, $d(\eta(\xi_0))=0$), we get 
 \begin{equation}\label{variation_met}
  \begin{split}
   \dot{g} &= - d\eta(\cdot,(\Phi a)\otimes \eta(\cdot))  -\eta(a) d\eta(\cdot,\Phi(\cdot)) - (d\eta(a)\wedge \eta) (\cdot,\Phi(\cdot)) - 2\eta(a)\eta \otimes \eta\\
   & = -\eta(a) g - \eta(a)\eta \otimes \eta + b
  \end{split}
\end{equation}
where  \begin{equation}
  \begin{split}
b(X,Y) &:= -\eta(Y)d\eta(X,\Phi a) +\eta(X) (\Phi Y).\eta(a)\\
&= \eta(Y)d\eta(\Phi X,a) + \eta(X)d\eta(\Phi Y, a).   
  \end{split}
  \end{equation}
 %$b=  d\eta((\Phi a)\otimes \eta(\cdot),\cdot)  -  (d\eta(a)\wedge \eta) (\Phi(\cdot),\cdot)$. 
 
Moreover, $dv_g =\frac{1}{n!}\eta\wedge (d\eta)^n$, thus 
  \begin{equation}\label{varVOLform}
   \left(\frac{d}{dt} dv_{t}\right)_{t=0} = -(n+1)\eta(a) dv_{g_0}.
  \end{equation} The scalar curvature differs from the transversal scalar curvature by a constant  \begin{equation}
   s_g=s_g^T -2n
  \end{equation} following the formulas $Ric(X,\xi)=2n\eta(X)$ and $Ric(Y,Z) =Ric^T(Y,Z) -2g(Y,Z)$ for $X\in \Gamma(TM)$ and $Y,Z \in \Gamma(D)$ see~\cite[Theorem 7.3.12]{BG:book}. As computed in~\cite[p.63]{besse} we have $$\int_M\dot{s_g} dv_g = \int_M ( \Delta(\mbox{tr}_g \dot{g} + \delta(\delta\dot{g}))- g(\dot{g},Ric_g))dv_g = -\int_Mg(\dot{g},Ric_g)dv_g$$ where $\delta$ denotes the co-adjoint of the Levi-Civita connection on the space of symmetric tensors. Since $\int_M\delta(\beta) dv_g =0$ and $\int_M \Delta(\beta) dv_g=0$ for any tensor $\beta$, combining~\eqref{variation_met} with the last formula we have 
  \begin{equation}\label{INTvars}
  \begin{split}
   \int_M\dot{s_g} dv_g &= \int_M (\eta(a)(s_g + g(\eta\otimes \eta,Ric_g)) - g(b,Ric_g))dv_g\\
   &= \int_M (\eta(a)(s_g +2n) - g(b,Ric_g))dv_g\\
   &= \int_M \eta(a)s^T_g dv_g
   \end{split}
  \end{equation} for the second line we used the identity $Ric_g(\xi,\xi) =2n$ recalled above and, for the last line, the fact that $g(b,Ric_g)=0$ that we shall now prove. Observe that $b$ is symmetric and that at each point $p\in M$, $b(\xi,\xi)=0$, $b(u,v)=0$ if $u,v\in D_p$, while $Ric(u,\xi) = 0$ at $p$ whenever $u\in D_p$. Writing $b$ and $Ric_g$ with respect to an orthonormal basis $\xi, e_1,Je_1,\dots, e_n,Je_n$ of $T_pM$ with $e_i \in D_p$ we see that \begin{equation}\label{b_perp_ric} g_p(b,Ric_g)=0.
  \end{equation}

  \begin{rem}\label{remETAabasic}
   We have $\mL_a\eta = 0$. Indeed, $\mL_{\xi_t}\eta_t = 0$ and thus $$0= \mL_a\eta+ \mL_\xi(-\eta(a)\eta) = \mL_a\eta -\mL_\xi(\eta(a))\eta = \mL_a\eta$$ since $a\in T_\xi\kt_k$ (i.e $[a,\xi]=0$), so $$\mL_\xi(\eta(a)) = \xi.(\eta(a)) = d\eta(\xi,a)+ a.\eta(\xi) +\eta([a,\xi]) =d\eta (\xi,a)=0.$$
  \end{rem}
Putting the variational formulas \eqref{varVOLform} and \eqref{INTvars} together we get  
   \begin{equation}\label{INTvarST}
  \begin{split}
   \frac{d}{dt} \left(\bfS_{\xi_t}\right)_{t=0}  &= \int_M (\dot{s_g}dv_g  + s_g^T\dot{dv_g})\\
   &= \int_M (\eta(a) s_g^T dv_g -(n+1)\eta(a)s_g^Tdv_g)\\
   &= -n\int_M\eta(a)s_g^Tdv_g.
  \end{split}
\end{equation} Hence, using \eqref{INTvarST} and \eqref{INTvars} we have
  \begin{equation}\label{INTvarH}
  \begin{split}
   \left(\frac{d}{dt}\bfH(\xi_t) \right)_{t=0} &= \frac{\bfS^{n}_{\xi}}{\bfV_\xi^{n+1}} \left( (n+1)\bfV_\xi \dot{\bfS_{\xi}} -n\bfS_{\xi} \dot{\bfV_\xi}\right)\\
   & = -n(n+1) \frac{\bfS^{n}_{\xi}}{\bfV_\xi^n} \left( \int_M \eta(a)\left(s_g^T -\frac{\bfS_{\xi}}{\bfV_\xi}\right)dv_g\right)\\
   & = -n(n+1) \frac{\bfS^{n}_{\xi}}{\bfV_\xi^n} \left( \int_M \eta(a)\Delta_B\psi_gdv_g\right)
  \end{split}
\end{equation} where $\Delta_B$ is the basic Laplacian and $\psi_g$ is the unique (normalized to have mean value $0$) basic function satisfying $\rho^T= \rho^T_H +i\del\bar{\del} \psi_g.$ Note that $\eta(a)$ is basic by Remark~\ref{remETAabasic}. We continue  

  \begin{equation}\label{INTvarlast}
  \begin{split}
   \int_M \eta(a)\Delta_B\psi_gdv_g& = \int_M g(d_B\eta(a), d_B\psi_g)dv_g \\
   &= \int_M g(d\eta(a), d\psi_g)dv_g\\
   &=\int_M (\nabla^g\psi_g).\eta(a)dv_g\\
    &=\int_M( (\mL_a\eta)(\nabla^g\psi_g) - d\eta(a, \nabla^g\psi_g)) dv_g\\
    &= -\int_M d\eta(a, \nabla^g\psi_g) dv_g\\
    &= -\int_M (g(\Phi(a), \nabla^g\psi_g) - \eta(\Phi(a))\eta(\nabla^g\psi_g)) dv_g\\
    &= -\int_M g(\Phi(a), \nabla^g\psi_g)dv_g\\
    &= -\int_M \Phi(a).\psi_g \,dv_g\\
    &= -\bfF_\xi(\Phi(a)).
  \end{split}
\end{equation}
Combining \eqref{INTvarlast}, \eqref{INTvarH} and \eqref{INTvarST}, we get Lemma~\ref{lemmaCRIT}.\end{proof}

\subsection{The second variation}\label{ss2ndVAR}

% \begin{theorem} The rays of Reeb vector fields admitting a compatible cscS of positive transversal curvature are isolated in the Sasaki cone.  
% \end{theorem}

To simplify the expression of the second variation of the Einstein--Hilbert functional, we use the standard notation for inner product on the space $L^2(M)$ of square integrable real valued functions on $M$ $$\langle f,h \rangle := \int_M f h dv_g \;\;\mbox{ and }\;\; \|f\|^2:= \int_M f^2dv_g$$ and for vector field $X\in\Gamma(TM)$ or one form $\beta\in\Gamma(T^*M)$ $$\|X\|^2:= \int_M g(X,X)dv_g \;\;\mbox{ and }\;\; \|\beta\|^2:= \int_M g(\beta,\beta)dv_g.$$ Moreover we define the normalized transversal scalar curvature $$\mathring{s}^T =s^T - \frac{\bfS}{\bfV}$$ which integrates to zero.

\begin{lemma}\label{2ndvar} 
For each $a\in T_\xi \kt_k^+$ and variation $\xi_t =\xi +ta$, we have   
  \begin{equation*}\begin{split}
  \frac{d^2}{dt^2}\bfH(\xi_t)_{t=0} &= n(n+1)(2n+1)\frac{\bfS^n_\xi}{\bfV^n_\xi}\|d(\eta(a))\|^2 - n(n+1)\frac{\bfS^{n+1}_\xi}{\bfV^{n+1}_\xi} \|\eta(a)\|^2 \\
  & \;\;\;\;+n(n+1)^2\frac{\bfS^n_\xi}{\bfV^n_\xi}\int_M \eta(a)^2 \mathring{s}^Tdv_g\\
  & \;\;\;\; +  n(n+1)\frac{\bfS^{n-1}_\xi}{\bfV^{n+2}_\xi} \left(n\bfV\langle \mathring{s}^T, \eta(a) \rangle - \bfS\langle \eta(a),1 \rangle \right)^2.   \end{split}
 \end{equation*}
\end{lemma}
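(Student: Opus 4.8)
The plan is to differentiate $\bfH=\bfS_\xi^{n+1}\bfV_\xi^{-n}$ twice along $\xi_t=\xi+ta$ and to organize everything through logarithmic differentiation. From $\bfH=\bfS^{n+1}\bfV^{-n}$ one gets, at $t=0$,
\[
\frac{\ddot{\bfH}}{\bfH}=(n+1)\frac{\ddot{\bfS}}{\bfS}-n\frac{\ddot{\bfV}}{\bfV}+n(n+1)\Big(\frac{\dot{\bfS}}{\bfS}-\frac{\dot{\bfV}}{\bfV}\Big)^2 ,
\]
so the whole computation reduces to the four scalars $\dot{\bfS},\ddot{\bfS},\dot{\bfV},\ddot{\bfV}$. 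The first variations $\dot{\bfV}=-(n+1)\langle \eta(a),1\rangle$ and $\dot{\bfS}=-n\langle\eta(a),s^T\rangle$ are already contained in \eqref{varVOLform} and \eqref{INTvarST}, and after substituting them the bracket $\big(\tfrac{\dot{\bfS}}{\bfS}-\tfrac{\dot{\bfV}}{\bfV}\big)^2$ reproduces verbatim the last term $n(n+1)\frac{\bfS^{n-1}}{\bfV^{n+2}}\big(n\bfV\langle\mathring s^T,\eta(a)\rangle-\bfS\langle\eta(a),1\rangle\big)^2$ of the Lemma, since $\bfV\dot{\bfS}-\bfS\dot{\bfV}=-\big(n\bfV\langle\mathring s^T,\eta(a)\rangle-\bfS\langle\eta(a),1\rangle\big)$. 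Thus the genuine content is carried by the two second variations.

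For the volume the key point is that $\eta(a)$ is basic (Remark~\ref{remETAabasic}): writing $u:=\eta(a)$ and $\eta_t=\eta/(1+tu)$, every term of $\eta_t\wedge(d\eta_t)^n$ containing a factor $du\wedge\eta$ dies against the $\eta_t$ in front, giving the exact identity $dv_{g_t}=(1+tu)^{-(n+1)}dv_g$. Hence $\bfV_t=\int_M(1+tu)^{-(n+1)}dv_g$ and $\ddot{\bfV}=(n+1)(n+2)\|u\|^2$. For $\bfS$ I would avoid differentiating the metric twice and instead differentiate the first-variation identity itself: applying \eqref{INTvarST} with base point $\xi_t$ (legitimate, since $\eta_t(\xi_t)=1$ and $\dot\xi_t=a$) gives $\frac{d}{dt}\bfS_{\xi_t}=-n\int_M\eta_t(a)\,s^T_{g_t}\,dv_{g_t}=-n\int_M u(1+tu)^{-(n+2)}s^T_{g_t}\,dv_g$, and one more $t$-derivative at $t=0$ yields $\ddot{\bfS}=-n\int_M u\,\dot s^T\,dv_g+n(n+2)\int_M u^2 s^T\,dv_g$, where $\dot s^T=\dot s_g$ because $s_g=s^T-2n$.

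The crux is therefore the single weighted integral $\int_M u\,\dot s_g\,dv_g$, where, unlike in the proof of Lemma~\ref{lemmaCRIT}, the Laplacian and divergence terms of the Besse linearization $\dot s_g=\Delta(\mathrm{tr}_g\dot g)+\delta\delta\dot g-g(\dot g,Ric_g)$ \cite{besse} no longer integrate away because of the weight $u$. Using \eqref{variation_met} I would compute $\mathrm{tr}_g\dot g=-2(n+1)u$, integrate the first term by parts into $\langle du,d(\mathrm{tr}_g\dot g)\rangle$, and treat $\int_M u\,\delta\delta\dot g\,dv_g=\int_M\langle du,\delta\dot g\rangle\,dv_g$ via the adjoint identity $\int_M\langle \delta\dot g,du\rangle=\int_M\langle\dot g,\nabla^2 u\rangle$; the Sasaki structure equations ($\nabla_X\xi=-\Phi X$, the splitting $a=\eta(a)\xi+\Phi\nabla\eta(a)$, and $\nabla_\xi\nabla u=\nabla_{\nabla u}\xi$) then convert the $b$-contribution into a multiple of $|\nabla u|^2$. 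The remaining term $-\int_M u\,g(\dot g,Ric_g)\,dv_g$ collapses, using $g(b,Ric_g)=0$ from \eqref{b_perp_ric}, $Ric(\xi,\xi)=2n$ and $\mathrm{tr}_g Ric_g=s_g$, to $\int_M u^2 s^T\,dv_g$. Collecting these contributions writes $\int_M u\,\dot s_g\,dv_g$ as a combination of $\|d(\eta(a))\|^2$ and $\int_M\eta(a)^2 s^T\,dv_g$; feeding this together with $\ddot{\bfV}$ into the logarithmic formula, splitting $s^T=\mathring s^T+\bfS/\bfV$, and observing that the $\frac{\bfS^{n+1}}{\bfV^{n+1}}\|\eta(a)\|^2$ pieces cancel between $\ddot{\bfS}$ and $\ddot{\bfV}$, produces exactly the four displayed terms. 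I expect the main obstacle to be precisely this bookkeeping of the divergence/Laplacian terms in $\int_M u\,\dot s_g\,dv_g$, where the Sasaki identities and the precise numerical factors in $\dot g$ must be tracked with care; everything else is substitution into the logarithmic identity above.
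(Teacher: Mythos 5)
Your reduction scheme is sound, and it is organized more cleanly than the paper's own proof: the logarithmic-derivative identity for $\bfH=\bfS^{n+1}\bfV^{-n}$, the exact formula $dv_{g_t}=(1+t\eta(a))^{-(n+1)}dv_g$ (hence $\ddot{\bfV}=(n+1)(n+2)\|\eta(a)\|^2$), the re-application of \eqref{INTvarST} at the moving base point $\xi_t$ to get $\ddot{\bfS}=-n\int_M u\,\dot s^T dv_g+n(n+2)\int_M u^2 s^T dv_g$ with $u=\eta(a)$, and the identification of $\bfV\dot{\bfS}-\bfS\dot{\bfV}$ with the squared term of the lemma are all correct; the paper instead differentiates its expression for $\tfrac{d}{dt}\bfH$ directly, which is the same bookkeeping. (Two small caveats: dividing by $\bfS$ requires $\bfS_\xi\neq 0$, so one should expand $\tfrac{d^2}{dt^2}(\bfS^{n+1}\bfV^{-n})$ by the product rule, which needs only $\bfV\neq0$.)

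The genuine gap is that the single integral carrying all the content, $\int_M u\,\dot s_g\,dv_g$, is never computed, and the sketch you give of it cannot be executed as written. First, the two ``Sasaki structure equations'' you invoke, $\nabla_X\xi=-\Phi X$ and $a=\eta(a)\xi+\Phi\nabla\eta(a)$, belong to incompatible normalizations of $(\Phi,d\eta,g)$: used together they give $\nabla_a\xi=\nabla u$, whereas the convention-free consequence of $\xi$ being a unit Killing field and $\iota_a d\eta=-du$ is $\nabla_a\xi=\nabla_\xi a=-\tfrac12\nabla u$, and this factor is decisive. Second, and more seriously, carrying out your own route with the correct identity gives
\begin{equation*}
\langle \dot g,\nabla^2u\rangle = u\,\Delta u-|\nabla u|^2,\qquad\text{so}\qquad \int_M u\,\delta(\delta\dot g)\,dv_g=0;
\end{equation*}
indeed $\delta\dot g\equiv 0$, since with $\dot g=-ug+u\,\eta\otimes\eta-(a^\flat\otimes\eta+\eta\otimes a^\flat)$ one has $\delta(-ug)=du$, $\delta(u\,\eta\otimes\eta)=0$ and $\delta(a^\flat\otimes\eta+\eta\otimes a^\flat)=-2(\nabla_a\xi)^\flat=du$. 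Consequently $\int_M u\,\dot s^T dv_g=-2(n+1)\|du\|^2+\int_M u^2s^T dv_g$, and feeding this into your logarithmic identity yields the coefficient $2n(n+1)^2$, not $n(n+1)(2n+1)$, in front of $\frac{\bfS^n_\xi}{\bfV^n_\xi}\|d(\eta(a))\|^2$; the other three terms are unchanged. So precisely the assertion that your bookkeeping ``produces exactly the four displayed terms'' is what fails. (The tension is in fact with the paper's own proof, whose evaluation $\int_M u\,\delta(\delta\dot g)\,dv_g=\|du\|^2$ rests on the rule $\delta(\alpha\otimes\alpha)=2\delta(\alpha)\alpha$, which omits the term $-\nabla_{\alpha^\sharp}\alpha$. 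A direct test on the weighted $3$-sphere, where $\bfV_\bfv\propto (v_1v_2)^{-1}$ and $\bfS_\bfv\propto (v_1+v_2)/(v_1v_2)$, gives at the round point the exact value $\ddot{\bfH}=\tfrac12\bfS^2_\xi/\bfV_\xi$, which matches the $2n(n+1)^2$ coefficient and not the displayed one.) As it stands, the proposal neither supplies the decisive computation nor would its correct execution reproduce the statement as printed.
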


\begin{proof}
Considering that expression $$\frac{d}{dt} \bfH(\xi_t)_{t=0} = -n(n+1)\frac{\bfS^n_\xi}{\bfV^n_\xi}\int_M \eta(a) \mathring{s}^Tdv_g$$ we have that 
 
 \begin{equation}\label{secondVAR}\begin{split}
 \frac{-1}{n(n+1)}\frac{d^2}{dt^2}\left(\bfH(\xi_t)\right)_{t=0} &= n\left( \frac{\bfS^{n-1}_\xi\dot{\bfS}}{\bfV^n_\xi} -  \frac{\bfS^{n}_\xi\dot{\bfV}}{\bfV^{n+1}_\xi} \right) \int_M \eta(a) \mathring{s}^Tdv_g \\
 &\;\;\;-\frac{\bfS^n_\xi}{\bfV^n_\xi}\int_M \eta(a)^2  \mathring{s}^Tdv_g + \frac{\bfS^n_\xi}{\bfV^n_\xi}\int_M \eta(a)  \dot{s}^T dv_g \\
 &\;\;\;-\frac{\bfS^n_\xi}{\bfV^n_\xi}\int_M \eta(a)^2  \left(\frac{\dot{\bfS}}{\bfV_\xi} -  \frac{\bfS_\xi\dot{\bfV}}{\bfV^{2}_\xi}\right)dv_g \\
 &\;\;\;- (n+1)\frac{\bfS^n_\xi}{\bfV^n_\xi}\int_M \eta(a)^2 \mathring{s}^Tdv_g.
 \end{split}\end{equation}

The formulas~\eqref{varVOLform} and \eqref{INTvarST} can be used again here to see that  
 \begin{equation*}\begin{split}\bfV \dot{\bfS} - \bfS\dot{\bfV} &= -n \bfV\int_M \eta(a)s^Tdv_g + (n+1)\bfS\int_M \eta(a)dv_g\\ 
 &= -n\bfV \int_M \eta(a)\mathring{s}^Tdv_g + \bfS\int_M \eta(a)dv_g\end{split}
 \end{equation*}  so that, summing the first and fourth terms of the right hand side of \eqref{secondVAR} gives 
 \begin{equation}\label{1and4termsSECvar}\begin{split} 
 -n^2&\frac{\bfS^{n-1}_\xi}{\bfV^{n}_\xi}\left(\int_M\eta(a) \mathring{s}^Tdv_g\right)^2 + n \frac{\bfS^{n}_\xi}{\bfV^{n+1}_\xi}\int_M\eta(a) dv_g\int_M\eta(a) \mathring{s}^Tdv_g \\
 &+  n \frac{\bfS^{n}_\xi}{\bfV^{n+1}_\xi}\int_M\eta(a) dv_g\int_M\eta(a) \mathring{s}^Tdv_g -  \frac{\bfS^{n+1}_\xi}{\bfV^{n+2}_\xi}\left(\int_M\eta(a) dv_g\right)^2 \\
 &= - \frac{\bfS^{n-1}_\xi}{\bfV^{n+2}_\xi} \left(n\bfV\langle \mathring{s}^T, \eta(a) \rangle - \bfS\langle \eta(a),1 \rangle \right)^2.
 \end{split}
 \end{equation}

 For the third term of \eqref{secondVAR}, that is $\frac{\bfS^n_\xi}{\bfV^n_\xi}\int_M \eta(a)  \dot{s}^T dv_g$, we need to work a little bit more. First, as before $s_g=s_g^T -2n$ so that, again by~\cite[p.63]{besse}, 
\begin{equation}\label{variation_curv}\dot{s^T_g} = \dot{s_g} =\Delta(\mbox{tr}_g \dot{g}) + \delta(\delta\dot{g})- g(\dot{g},Ric_g)\end{equation} and, see~\eqref{variation_met}, $\dot{g}=-\eta(a) g - \eta(a)\eta \otimes \eta + b$. 

For the first term of~\eqref{variation_curv}, we compute that $\mbox{tr}_g \dot{g} = -2(n+1)\eta(a)$ and we get 
\begin{equation}\label{1stTERM}\begin{split}
\int_M\eta(a) \Delta(\mbox{tr}_g \dot{g} )dv_g &= -2(n+1) \int_M\eta(a)\Delta\eta(a)dv_g \\
&= -2(n+1) \int_M|d\eta(a)|^2 dv_g. \end{split}\end{equation}

For the second term of~\eqref{variation_curv} we need to understand a little bit better the operator $\delta: S^p(T^*M) \ra S^{p-1}(T^*M)$. It is defined as the co-adjoint of the Levi-Civita connection seen as an operator on symmetric tensors. One can check the following two rules for every symmetric $p$--tensor $\beta\in S^p(T^*M)$, $1$--form $\alpha\in\Gamma(T^*M)$ and function $f \in C^1(M)$, \begin{equation}\label{rules_delta}\begin{split} &\delta (f\beta) = f\delta \beta - \beta(\nabla f, \cdot, \dots,\cdot) \\ 
&\delta(\alpha\otimes \alpha) = 2\delta(\alpha) \alpha. \end{split}\end{equation} Moreover, on $1$--forms $\delta$ coincides with the codifferential, so that $$\int_M\eta(a) \delta(\delta(\dot{g}))dv_g = \int_M g(d\eta(a),\delta(\dot{g}))dv_g.$$ 
Now, by checking that for each $u,v\in D$, $b(\xi,u)= - g(a,u)$, $b(u,v)=0$ and $b(\xi,\xi)=0$, we can rewrite \begin{equation}
b = - g(a,\cdot)\otimes \eta - \eta\otimes g(a,\cdot) -2\eta(a) \eta^2
\end{equation} 
Hence, using~\eqref{rules_delta} we have
\begin{equation}\label{delta_b}\delta b  = - 2(\delta(\eta)g(a,\cdot) + \delta(g(a,\cdot))\eta)  -4\eta(a)\delta(\eta)\eta +2\eta(\nabla(\eta(a)))\eta.
\end{equation} The following (pointwise) identity is consequence of Remark~\ref{remETAabasic} 
\begin{equation} \label{identityETA}
g(d\eta(a),\eta) = \mL_\xi \eta(a)= 0.
\end{equation} This last rule simplifies our problem a lot. Indeed, applying it with formula \eqref{delta_b}, we have 
$$g(d\eta(a),\delta b) = - \delta(\eta) g(d\eta(a), g(a,\cdot)) =- \delta(\eta)\mL_a\eta(a)$$ and then  
\begin{equation}\label{detaa_delta_b=0} \begin{split}\int_Mg(d\eta(a),\delta b) dv_g &= -\int_M \delta(\eta)\mL_a\eta(a) dv_g \\
&= -\int_M g(\eta, d\mL_a\eta(a)) dv_g \\
&= -\int_M \mL_\xi\mL_a\eta(a) dv_g =0\end{split}\end{equation}
since $\mL_\xi\mL_a\eta(a) = \mL_a\mL_\xi\eta(a) -\mL_{[\xi,a]} \eta(a)=0$ because $\mL_\xi\eta(a)=0$ and $[\xi,a]=0$. The identity \eqref{identityETA} implies also that
\begin{equation}
 g(d\eta(a),\delta (\eta\otimes \eta)) = 2\delta(\eta) g(d\eta(a),\eta) =0
\end{equation} which, combined with ~\eqref{detaa_delta_b=0} and that\begin{equation}\begin{split}
\int_M \eta(a) g(d\eta(a),\delta g)dv_g &=\frac{1}{2}\int_M  g(d\eta(a)^2,\delta g)dv_g \\
&= \frac{1}{2}\int_M g(D d(\eta(a)^2), g)dv_g\\
&= \frac{-1}{2} \int_M \Delta \eta(a)^2 dv_g =0
\end{split}\end{equation} gives the second term of~\eqref{variation_curv} as 
\begin{equation}\label{2sdTERM_varCURV}\begin{split}
 \int_M\eta(a)\delta(\delta\dot{g})dv_g &= \int_M g(d\eta(a),\delta\dot{g})dv_g \\
 &= \int_M g(d\eta(a),\delta(-\eta(a) g - \eta(a)\eta \otimes \eta + b))dv_g\\
 &= \int_M (g(d\eta(a), g(\nabla \eta(a), \cdot)) +\eta(\nabla(\eta(a))) g(d\eta(a),\eta))dv_g\\
 &= \int_M g(d\eta(a), g(\nabla \eta(a), \cdot))dv_g\\
 &= \int_M |d\eta(a)|_g^2dv_g
 \end{split}
\end{equation}

Following the same lines than \eqref{INTvars}, the last term of~\eqref{variation_curv}, when integrated with $\eta(a)$ gives 
\begin{equation}\label{3TERM_varCURV}
\int_M\eta(a)^2s^T_gdv_g.
\end{equation}
 In summary we have 
 \begin{equation} \label{thirdTERMsecondVAR}
 \int_M \eta(a)  \dot{s}^T dv_g= \int_M \left(-(2n+1)|d(\eta(a))|^2  +\eta(a)^2s_ g^T\right)dv_g.
 \end{equation}
 
 Putting \eqref{1and4termsSECvar} and~\eqref{thirdTERMsecondVAR} in~\eqref{secondVAR} we get  
  
 \begin{equation}\label{secondVAR2}\begin{split}
 \frac{-1}{n(n+1)}\frac{d^2}{dt^2}&\left(\bfH(\xi_t)\right)_{t=0} =-\frac{\bfS^n_\xi}{\bfV^n_\xi}\int_M \eta(a)^2  \mathring{s}^Tdv_g + \frac{\bfS^n_\xi}{\bfV^n_\xi} \int_M \eta(a)^2s_ g^T dv_g  \\
 &\;\;\;- (n+1)\frac{\bfS^n_\xi}{\bfV^n_\xi}\int_M \eta(a)^2 \mathring{s}^Tdv_g  -(2n+1) \frac{\bfS^n_\xi}{\bfV^n_\xi} \|d\eta(a)\|^2\\
 &\;\;\;- \frac{\bfS^{n-1}_\xi}{\bfV^{n+2}_\xi} \left(n\bfV\langle \mathring{s}^T, \eta(a) \rangle - \bfS\langle \eta(a),1 \rangle \right)^2\\
 &= \frac{\bfS^{n+1}_\xi}{\bfV^{n+1}_\xi}\int_M \eta(a)^2 dv_g - (n+1)\frac{\bfS^n_\xi}{\bfV^n_\xi}\int_M \eta(a)^2 \mathring{s}^Tdv_g\\
 &\;\;\; -(2n+1) \frac{\bfS^n_\xi}{\bfV^n_\xi} \|d\eta(a)\|^2\\
 &\;\;\;- \frac{\bfS^{n-1}_\xi}{\bfV^{n+2}_\xi} \left(n\bfV\langle \mathring{s}^T, \eta(a) \rangle - \bfS\langle \eta(a),1 \rangle \right)^2.
 \end{split}\end{equation}
 \end{proof}

In the case of a cscS metric Lemma \ref{2ndvar} gives

\begin{corollary}\label{coroCSCSlocal2ndVAR} Suppose that $(M, g, D, J,\xi)$ is a cscS structure with constant transverse scalar curvature $s^T_g$, Reeb vector field $\xi$, and contact form $\eta$. For each $a\in T_\xi \kt_k^+$ and variation $\xi_t =\xi +ta$, we have   
  \begin{equation*}\begin{split}
  \frac{d^2}{dt^2}_{t=0} \bfH(\xi_t) &= n(n+1)(2n+1)(s^T_g)^n\left( \|d(\eta(a))\|_g^2 -\frac{s^T_g}{2n+1} \|\eta(a)\|_g^2 \right)
   \\
  & \;\; +n(n+1) \frac{(s^T_g)^{n+1}}{\bfV_\xi}\left(\int_M\eta(a)dv_g\right)^2 
  \end{split}
 \end{equation*}
\end{corollary}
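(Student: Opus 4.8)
The plan is to obtain the corollary directly from Lemma~\ref{2ndvar} by using the single feature of the cscS case that simplifies everything: the normalized transversal scalar curvature $\mathring{s}^T$ vanishes identically. Indeed, when $s^T_g$ is constant the total transversal scalar curvature is $\bfS_\xi=\int_M s^T_g\,dv_g = s^T_g\,\bfV_\xi$, so $\bfS_\xi/\bfV_\xi = s^T_g$ and hence $\mathring{s}^T = s^T_g - \bfS_\xi/\bfV_\xi = 0$. This is the only input beyond the general second-variation formula; there is no genuine analytic obstacle, the proof being a purely algebraic reduction of the four terms in Lemma~\ref{2ndvar}.

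First I would substitute $\mathring{s}^T\equiv 0$ into the right-hand side of Lemma~\ref{2ndvar}. The third term, a multiple of $\int_M \eta(a)^2\,\mathring{s}^T\,dv_g$, vanishes outright. In the fourth term the pairing $\langle \mathring{s}^T,\eta(a)\rangle$ also vanishes, so the square $\bigl(n\bfV\langle\mathring{s}^T,\eta(a)\rangle - \bfS\langle\eta(a),1\rangle\bigr)^2$ collapses to $\bfS_\xi^{2}\bigl(\int_M \eta(a)\,dv_g\bigr)^2$; combining this with the prefactor $n(n+1)\bfS_\xi^{n-1}\bfV_\xi^{-(n+2)}$ turns the fourth term into $n(n+1)\,\bfS_\xi^{n+1}\bfV_\xi^{-(n+2)}\bigl(\int_M\eta(a)\,dv_g\bigr)^2$.

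Next I would eliminate the ratios $\bfS_\xi^{k}/\bfV_\xi^{k}$ by means of $\bfS_\xi = s^T_g\,\bfV_\xi$, which gives $\bfS_\xi^{n}/\bfV_\xi^{n} = (s^T_g)^{n}$, $\bfS_\xi^{n+1}/\bfV_\xi^{n+1} = (s^T_g)^{n+1}$, and $\bfS_\xi^{n+1}/\bfV_\xi^{n+2} = (s^T_g)^{n+1}/\bfV_\xi$. The surviving first, second, and fourth terms then read $n(n+1)(2n+1)(s^T_g)^{n}\|d(\eta(a))\|^2$, $\,-n(n+1)(s^T_g)^{n+1}\|\eta(a)\|^2$, and $n(n+1)(s^T_g)^{n+1}\bfV_\xi^{-1}\bigl(\int_M\eta(a)\,dv_g\bigr)^2$. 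Finally I would factor $n(n+1)(2n+1)(s^T_g)^{n}$ out of the first two of these, using $-n(n+1)(s^T_g)^{n+1} = n(n+1)(2n+1)(s^T_g)^{n}\cdot\bigl(-\tfrac{s^T_g}{2n+1}\bigr)$, to recover the grouped expression $n(n+1)(2n+1)(s^T_g)^{n}\bigl(\|d(\eta(a))\|_g^2 - \tfrac{s^T_g}{2n+1}\|\eta(a)\|_g^2\bigr)$; adding back the fourth (volume) term reproduces exactly the stated formula. The only step requiring any care is the identification $\mathring{s}^T = 0$, which is immediate once one observes that a constant $s^T_g$ forces $\bfS_\xi = s^T_g\,\bfV_\xi$.
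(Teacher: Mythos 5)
Your proposal is correct and is exactly the paper's (implicit) derivation: the paper states the corollary with only the remark ``In the case of a cscS metric Lemma~\ref{2ndvar} gives,'' i.e.\ the intended proof is precisely the substitution $\mathring{s}^T\equiv 0$ and $\bfS_\xi = s^T_g\,\bfV_\xi$ into the four terms of Lemma~\ref{2ndvar}, followed by the algebraic regrouping you carry out. Your handling of each term (vanishing of the third, collapse of the fourth to $n(n+1)\,(s^T_g)^{n+1}\bfV_\xi^{-1}\bigl(\int_M\eta(a)\,dv_g\bigr)^2$, and factoring $n(n+1)(2n+1)(s^T_g)^n$ out of the first two) matches the stated formula exactly, so there is nothing to add.
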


\section{Critical Points, Stability, and Local Convexity} \label{secAppli}
\subsection{Sasakian K--stability}
In~\cite{CollinsSz}, Collins and Sz\'ekelyhidi introduced the notion of K-semistability for a general Sasakian manifold in terms of the K-semistability of the K\"ahler cone $(Y=C(M),\hat{J})$, as an affine variety, with respect to the polarization given by the Reeb vector field $\xi$. It turns out that this notion extends the orbifold K-semistability of Ross and Thomas \cite{RoThJDG} to irregular Sasakian structures.

In this note, we use only the simplest aspects of this notion and the proof we give of Theorem~\ref{theoSTABcrit} follows the ideas of the proof of Theorem 6 in~\cite{CollinsSz}. Consequently, we do not discuss the general notion of Sasakian K-semistability and encourage the interested reader to consult~\cite{CollinsSz} for more details. 

The starting ingredients are an affine variety $Y$ with an action of a torus $T_{\C}$ whose maximal compact subtorus $T_\R$ Lie algebra $\kt$ contains the Reeb vector field $\xi$. A $T_{\C}$--equivariant test configuration for $Y$ is given by a a set of $k$ $T_{\C}$--homogeneous generators $f_1,\dots, f_k$ of the coordinate ring of $Y$ and $k$ integers $w_1, \ldots, w_k$. The functions $f_1,\dots, f_k$ are used to embed $Y$ in $\C^k$ on which the integers $w_1, \ldots, w_k$ determine an $\C^*$ action (via the weights). By taking the flat limit of the orbits of $Y$ to $0\in \C$ we get a family of affine schemes $$\mathcal{Y} \longrightarrow \C.$$ There is then an action of $\C^*$ on central fiber $Y_0$, generated by $a \in  \mbox{Lie} (T_{\R}')$, where $T'_\C \subset\mbox{Gl}(k,\C)$ is some torus containing $T_{\C}$. The Donaldson--Futaki invariant of such a test configuration is essentially defined in~\cite{CollinsSz} to be
\begin{equation}\label{DonFutINVdefn}
 Fut(Y_0,\xi,a) := \frac{\bfV_\xi}{n}  D_{a}\left( \frac{\bfS_\xi}{\bfV_\xi} \right)+ \frac{\bfS_\xi D_{a}\bfV_\xi}{n(n+1)\bfV_\xi}.
\end{equation} They define that $(Y,\xi)$ is {\it $K$--semistable} if for each $T_{\C}$, such that $\xi \in \mbox{Lie} (T_{\R})$ and any $T_\C$ equivariant test configuration \begin{equation}\label{StabCOND}Fut(Y_0,\xi,a)\geq 0.
\end{equation} The case of a product configuration is when $\mathcal{Y}= Y\times \C$ and the action of $\C^*$ on $Y_0$ is induced by a subgroup of $T_{\C}$. 

\begin{proof}[Proof of Theorems~\ref{theoSTABcrit} and \ref{critcscS}]
To prove Theorem \ref{theoSTABcrit} we first note that the linearity in $a$ of the right hand side of~\eqref{DonFutINVdefn} implies that if $(Y,\xi)$ is $K$--semistable then $Fut(Y_0,\xi,a)=0$ for every product configuration. A straigthforward computation (done in \cite[Lemma 2.15]{TivC}) shows that there exists a constant $c_n>0$ depending only on the dimension $n$ such that $Fut(Y_0,\xi,a)=c_n \bfF_{\xi}(\Phi(a))$.
So Lemma \ref{lemmaCRIT} gives
$$d_\xi\bfH (a) = \frac{n(n+1)\bfS_\xi^{n}}{c_n\bfV_\xi^{n}}Fut(Y_0,\xi,a)$$ 
from which Theorem~\ref{theoSTABcrit} follows. 
%Otherwise, the only non-vanishing part of $Fut(Y_0,\xi,a)$ is $\frac{1}{n}d\bfS_\xi (a)$ which in this case is $\bfF_{\xi}$ by Lemma~\ref{lemmaCRIT}. This concludes the proof of Theorem \ref{theoSTABcrit}. 

Theorem \ref{critcscS} follows directly from Theorem \ref{theoSTABcrit}, Corollary 1 of \cite{CollinsSz}, and Proposition 5.2 of \cite{BGS}. 
\end{proof}

\subsection{Existence of critical points} \label{secEXISTcrit}

We say that a set $\Sigma$ is a transversal subset of $\kt^+_k$ if $\Sigma\subset\kt^+_k$ and that $\Sigma$ meets each ray passing through $\kt^+_k$ in a single point. In particular, a transversal subset of $\kt^+_k$ is a codimension one relatively compact subset of $\kt_k$ whose closure does not contain $0$. For example, taking any codimension $1$ vector subspace $H\subset \kt_k$ that does not contain $\xi$
the set $$\Sigma_{\xi,H}= \kt_k^+\cap \left\{\left. \xi + a\,\right|\,a \in H\right\}$$ is a transversal subset of $\kt^+_k$.  

\begin{proof}[Proof of Theorem~\ref{theoMINexist}]
Assume that the total transversal scalar curvature is sign definite and bounded in the sense that there exists an order $1$ homogeneous function $m: \kt_k^+ \ra \R_{>0}$, bounded below by a positive number on any a transversal subset of $\kt^+_k$, and such that 
\begin{equation}\label{trscbound}
|\bfS_{\xi}| \geq m_\xi \bfV_{\xi} \qquad \qquad \forall \xi \in \kt_k^+.
\end{equation} 
Another way to state this condition is that the set $\{\xi \in \kt_k^+ \,| \, \bfV_\xi = \bfS_\xi \}$ is relatively compact in $\kt_k$. This condition is fulfilled in the toric case~\cite{moi2}.  

Assuming the bound \eqref{trscbound} and first that $\bfS_{\xi}>0$, we have $$\bfH(\xi) \geq m_{\xi}^{n+1}\bfV_\xi.$$ The right hand side is still a homogeneous function. Consider $\Sigma$, a transversal subset of $\kt^+_k$, given by the projection of a codimension $1$ subspace $H$ transverse to $\xi$ in $T_\xi\kt_k^+$ (i.e $\Sigma=\Sigma_{\xi, H}$ as above). The condition above implies that there exists $m_o>0$ such that $$\bfH(\xi) \geq m^{n+1}_{\xi} \bfV_\xi \geq m_o^{m+1} \bfV_\xi\qquad \qquad \forall \xi\in \Sigma.$$ Now, $\bfV_\xi$ is a strictly convex function on $\Sigma$ that tends to infinity on the boundary $\del \Sigma$ as it is proved in~\cite{MSYvolume}. Thus, as a function on $\Sigma$, $\bfH$ reaches its minimum somewhere in (the relative interior of) $\Sigma$ that point, say $\xi_o$, is a minimum, thus a critical point, of $\bfH$ but $\bfS_{\xi_o}\neq 0$ by hypothesis. Therefore $\bfF_{\xi_o} \equiv 0$ thanks to Lemma~\ref{lemmaCRIT}. 
The case $\bfS_{\xi}< 0$ is similar and theorem~\ref{theoMINexist} follows. 
\end{proof}

\subsection{Local convexity (concavity)}

It is well known \cite{MSYvolume} that the volume functional is convex on the Sasaki cone. Since $\bfH$ is homogenous, it cannot be convex in the usual sense but one can wonder if it is transversally convex. For convenience we shall refer to transverse convexity or transverse concavity by simply convexity or concavity. We also know from \cite{moi2,BoToJGA} that global convexity generally fails in the cscS case. Hence, it is interesting to investigate when local convexity and/or concavity can hold. 

\begin{lemma}\label{conlemm} 
Let $(M^{2n+1},D,J,g, \xi)$ be a cscS compact manifold of non-zero transversal scalar curvature, and $T\subset CR(D,J)\cap \mbox{Isom}(g)$ be the maximal compact torus whose Lie algebra is identified with $\kt_k$.  
\begin{enumerate}
\item If the transverse scalar curvature is negative, the E-H functional $\bfH$ is convex if $n$ is even and concave if $n$ is odd.
\item If the transverse scalar curvature is positive and the first non-zero eigenvalue of the Laplacian, restricted to the space of $T$--invariant functions is bounded below by $\frac{s^T_g}{2n+1}$, then $\bfH$ is convex near $\xi$.     
\end{enumerate}
Moreover, in both cases the cscS metric is isolated in the Sasaki cone.
\end{lemma}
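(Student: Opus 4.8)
The plan is to reduce everything to the clean second-variation formula already packaged in Corollary~\ref{coroCSCSlocal2ndVAR}. Since $(M,g,D,J,\xi)$ is cscS, the transverse scalar curvature $s^T_g$ is a nonzero constant, so $\mathring{s}^T\equiv 0$ and the general Lemma~\ref{2ndvar} collapses to
\begin{equation*}
\frac{d^2}{dt^2}\Big|_{t=0}\bfH(\xi_t) = n(n+1)(2n+1)(s^T_g)^n\Big(\|d(\eta(a))\|_g^2 - \tfrac{s^T_g}{2n+1}\|\eta(a)\|_g^2\Big) + n(n+1)\frac{(s^T_g)^{n+1}}{\bfV_\xi}\Big(\int_M \eta(a)\,dv_g\Big)^2.
\end{equation*}
Transverse convexity (resp. concavity) at $\xi$ amounts to showing this quantity has a fixed sign for all $a\in T_\xi\kt_k^+$, and vanishes precisely when $a\in\R\xi$; note $a\in\R\xi$ corresponds exactly to $\eta(a)$ being constant, equivalently $d(\eta(a))=0$, which is the degenerate direction one must isolate.

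For part (1), the transverse scalar curvature is negative. Then $-s^T_g/(2n+1)>0$, so the first bracket is a sum of two nonnegative terms $\|d(\eta(a))\|^2 + \tfrac{|s^T_g|}{2n+1}\|\eta(a)\|^2$, which is strictly positive unless $\eta(a)\equiv 0$; but $\eta(\xi)=1>0$ forces $\eta(a)$ to be a nonzero function in the allowed directions, and in fact $\eta(a)\equiv 0$ cannot happen for $a$ genuinely transverse to $\R\xi$ — the degeneracy $\eta(a)=\mathrm{const}$ is what signals $a\in\R\xi$. The overall prefactor is $(s^T_g)^n$, which is positive when $n$ is even and negative when $n$ is odd, and the final squared term carries sign $(s^T_g)^{n+1}$, matching. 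Thus the whole expression is sign-definite with the parity dictated by $n$, giving convexity for $n$ even and concavity for $n$ odd; the equality case $a\in\R\xi$ follows by tracking when every nonnegative summand vanishes simultaneously.

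For part (2), $s^T_g>0$, so the first bracket is $\|d(\eta(a))\|^2 - \tfrac{s^T_g}{2n+1}\|\eta(a)\|^2$, which is no longer manifestly positive. Here is where the spectral hypothesis enters: decompose the $T$-invariant function $\eta(a)$ into its mean value plus a part of zero mean, and apply the assumption that the first nonzero eigenvalue $\lambda_1$ of the Laplacian restricted to $T$-invariant functions satisfies $\lambda_1 \geq \tfrac{s^T_g}{2n+1}$. By the Rayleigh quotient characterization, $\|d(\eta(a))\|^2 \geq \lambda_1 \|\eta(a)-\overline{\eta(a)}\|^2 \geq \tfrac{s^T_g}{2n+1}\|\eta(a)-\overline{\eta(a)}\|^2$, so the first bracket dominates $\tfrac{s^T_g}{2n+1}\big(\|\eta(a)-\overline{\eta(a)}\|^2-\|\eta(a)\|^2\big) = -\tfrac{s^T_g}{2n+1}\,\overline{\eta(a)}^2\,\bfV_\xi$. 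The deficit is then exactly compensated by the final squared term $\tfrac{(s^T_g)^{n+1}}{\bfV_\xi}(\int\eta(a))^2$, since $\int\eta(a) = \overline{\eta(a)}\,\bfV_\xi$; carrying the positive prefactor $(s^T_g)^n$ through, one obtains nonnegativity, hence convexity near $\xi$. The main obstacle is verifying that equality forces $\eta(a)$ constant and thus $a\in\R\xi$: this requires the eigenvalue bound to be used sharply, so that equality in Rayleigh forces $\eta(a)-\overline{\eta(a)}$ to lie in the $\lambda_1$-eigenspace while simultaneously the other terms vanish, and then a short argument (using that $\eta(a)$ is basic and the Reeb flow structure) rules out nonconstant solutions, leaving only the trivial direction.

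Finally, in both cases the strict transverse convexity/concavity at $\xi$ means $\xi$ is a nondegenerate critical point of $\bfH$ restricted to a transversal slice $\Sigma_{\xi,H}$, hence an isolated critical point there; since by Theorem~\ref{theoCRIT=zero} cscS rays are critical points of $\bfH$, no other cscS ray can accumulate at $\xi$, so the cscS metric is isolated in the Sasaki cone.
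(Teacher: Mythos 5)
Your reduction to Corollary~\ref{coroCSCSlocal2ndVAR} is the right starting point, and your part (2) essentially reproduces the paper's argument (the paper simply normalizes $\int_M\eta(a)\,dv_g=0$, which kills the squared term; your mean/zero-mean decomposition is the same cancellation carried out by hand). But part (1) contains a genuine error: you claim the two terms of the second-variation formula have ``matching'' signs when $s^T_g<0$. They do not --- they have \emph{opposite} signs: the first term carries the sign of $(s^T_g)^n$, the second the sign of $(s^T_g)^{n+1}$. Concretely, take $a=\xi$: then $d(\eta(a))=0$ and $\eta(a)\equiv 1$, so the first term equals $-n(n+1)(s^T_g)^{n+1}\bfV_\xi\neq 0$, the second equals $+n(n+1)(s^T_g)^{n+1}\bfV_\xi\neq 0$, and they cancel. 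This same example breaks your equality-case argument (``every nonnegative summand vanishes simultaneously''): in the degenerate direction $a=\xi$ the summands are individually nonzero and cancel rather than vanish. The correct proof of part (1) is exactly the decomposition you deploy only in part (2): writing $f=\eta(a)-\overline{\eta(a)}$, the mean-value contributions cancel identically and
\[
\frac{d^2}{dt^2}\Big|_{t=0}\bfH(\xi_t)=n(n+1)(2n+1)(s^T_g)^n\Bigl(\|df\|_g^2-\tfrac{s^T_g}{2n+1}\|f\|_g^2\Bigr),
\]
which for $s^T_g<0$ is sign-definite with sign $(-1)^n$ and vanishes iff $f\equiv 0$, i.e.\ iff $\eta(a)$ is constant, i.e.\ iff $a\in\R\xi$ (a contact vector field $X$ with $\eta(X)=0$ is identically zero). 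This one-line cancellation is what the paper's normalization ``$\eta(a)$ is a Hamiltonian, so assume $\int_M\eta(a)\,dv_g=0$'' accomplishes.

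A second, smaller gap: in part (2) your equality case is not actually closed. Under the hypothesis $\lambda_1\geq s^T_g/(2n+1)$ alone, the second variation can vanish on a nontrivial direction if $\lambda_1=s^T_g/(2n+1)$ and a $T$-invariant $\lambda_1$-eigenfunction is realized as $\eta(a)-\overline{\eta(a)}$ for some $a\in\kt_k$; the ``short argument'' you promise in order to rule this out is never supplied, and no such argument is available from the hypotheses alone. The paper's own proof in fact only asserts convexity under the strict inequality $\lambda_1>s^T_g/(2n+1)$, which is what holds in the applications via the Lichnerowicz and Matsushima bounds; you should either assume strict inequality or flag the borderline case as genuinely degenerate. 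Your final paragraph on isolation (nondegeneracy of the critical point on a transversal slice together with Theorem~\ref{theoCRIT=zero}) is correct and matches what the paper leaves implicit.
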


\begin{proof}
Consider the formula for the second variation of $\bfH$ in the Corollary \ref{coroCSCSlocal2ndVAR}. Since $\eta(a)$ is a Hamiltonian we can normalize it so that $\int_M \eta(a)dv_g =0$. Thus, the formula reduces to
\begin{equation}\label{cscLOCALcvxCONDITION2}
 \frac{d^2}{dt^2}_{t=0} \bfH(\xi_t) = n(n+1)(2n+1)(s^T_g)^n\left(\|d(\eta(a))\|^2_g -\frac{s^T_g}{(2n+1)} \|\eta(a)\|^2_g\right). 
\end{equation}
There are two cases to consider.
\newline Case 1. $s^T_g<0$: Then Equation \eqref{cscLOCALcvxCONDITION2} implies that if $n$ is even, $\bfH$ is convex near $\xi$, whereas, if $n$ is odd, $\bfH$ is concave near $\xi$.
\newline Case 2. $s^T_g>0$. In this case using the well known inequality 
$$\|d(\eta(a))\|^2_g\geq \grl_1\|\eta(a)\|^2_g$$ 
we see that $\bfH$ is convex near $\xi$ as soon as $\grl_1>  \frac{s^T_g}{(2n+1)}$.
\end{proof}

\begin{proof}[Proof of Theorem \ref{theolocCVX}]
For negative transverse scalar curvature, the theorem follows immediately from (1) of Lemma \ref{conlemm}. So we consider the positive Sasaki-$\eta$-Einstein case. We give two proofs of this. First we note that a well known result of Tanno says that for every positive Sasaki-$\eta$-Einstein metric there is a transverse homothety to a Sasaki-Einstein metric with $s^T_g=4n(n+1)$ where the Lichnerowicz bound $\grl_1\geq 2n+1$ holds (cf. \cite[Corollary 11.3.11]{BG:book}). Thus, we have $\grl_1\geq 2n+1>\frac{4n(n+1)}{2n+1}=\frac{s^T_g}{2n+1}$. Thus Lemma \ref{conlemm} implies that $\bfH$ is convex near a Sasaki-Einstein metric. But we know that $\bfH$ is invariant under a transverse homothety. So it is convex for any positive Sasaki-$\eta$-Einstein metric. 

For the second proof we use the bound $\grl_1\geq\frac{s^T_g}{2n}$ due to Matsushima Theorem~\cite{matsushima} which holds for any positive transverse K\"ahler-Einstein metric as described in Theorem 3.6.2 in \cite{Gaubook}. The proof given there is local differential geometric in nature and clearly applies to the quasi-regular and irregular Sasakian structures. So the result follows from (2) of Lemma \ref{conlemm}.
\end{proof}  

Example~\ref{tripleCSC} exhibits a (positive) cscS metric for which $\bfH''(\xi)<0$. In particular, the bound $\grl_1>\frac{s^T_g}{2n+1}$ does not hold for this variation. 

Example \ref{criticalptsnoncsc} shows that local convexity (concavity) fails for $\bfH$ when $\bfS =0$ as one can expect. Indeed, the case $\bfS =0$ is a rather special type of critical point of $\bfH$ as it appears in Theorem~\ref{theoCRIT=zero}. It seems reasonable to put that case aside and investigate local convexity/concavity near rays of positive or negative cscS metrics. More generally, it is interesting to contemplate whether the EH--functional is a Morse function away from its zero set.

Example \ref{bothscal0csc} shows that there are cscS metrics with transverse scalar curvature identically zero. In this case the local convexity holds even though $\bfH$ vanishes up to order six at the given point.

\section{The Einstein-Hilbert Functional on the $\bfw$-cone of a Sasaki Join}\label{joinadd}
In this section we apply our results to the $(l_1,l_2)$-join $M_{l_1,l_2,\bfw}=M\star_{l_1,l_2}S^3_\bfw$ of a regular Sasaki manifold $M$ of constant scalar curvature with the weighted 3-sphere $S^3_\bfw$. These manifolds have recently been the object of study by the first and last authors \cite{BoToIMRN,BoToJGA,BoToNY,BoToPaul}. They include an infinite number of homotopy types as well as an infinite number of contact structures of Sasaki type occurring on the same manifold \cite{BoToIMRN,BoToinfcon}.

\subsection{A Sasaki Join}
We present only a brief review and refer to Section 3 of \cite{BoToJGA} for all the details.
Let $M$ be a regular Sasaki manifold which is an $S^1$-bundle over a compact CSC K\"ahler manifold $N$ with a primitive K\"ahler class $[\gro_N]\in H^2(N,\bbz)$.
Let $S^3_\bfw$ be the weighted 3-sphere, that is, $S^3$ with its standard contact structure, but with a weighted contact 1-form whose Reeb vector field generates rotations with generally different weights $w_1,w_2$ for the two complex coordinates $z_1,z_2$ of $S^3\subset \bbc^2$.

Consider the join $M_{l_1,l_2,\bfw}=M\star_{l_1,l_2}S^3_\bfw$, where both $\bfw=(w_1,w_2)$ and $\bfl=(l_1,l_2)$ are pairs of relatively prime positive integers. We can assume that the weights $(w_1,w_2)$ are ordered, namely they satisfy $w_1\geq w_2$. Furthermore, $M_{l_1,l_2,\bfw}$ is a smooth manifold if and only if $\gcd(l_2,l_1w_1w_2)=1$ which is equivalent to $\gcd(l_2,w_i)=1$ for $i=1,2$. Henceforth, we shall assume these conditions. 

The base orbifold $N\times \bbc\bbp^1[\bfw]$ has a natural K\"ahler structure, namely the product structure, and this induces a Sasakian structure $\mathcal{S}_{l_1,l_2,\bfw}=(\xi_{l_1,l_2,\bfw},\eta_{l_1,l_2,\bfw},\Phi,g)$ on $M_{l_1,l_2,\bfw}$. The transverse complex structure $J=\Phi |_{\cald_{l_1,l_2,\bfw}}$ is the lift of  the product complex structure on $N\times \bbc\bbp^1[\bfw]$. The K\"ahler form on $N\times \bbc\bbp^1[\bfw]$ is $\gro_{l_1,l_2}=l_1\gro_N+ l_2\gro_\bfw$ where $\gro_\bfw$ is the standard K\"ahler form on $\bbc\bbp^1[\bfw]$ which satisfies $[\gro_\bfw]=\frac{[\gro_0]}{w_1w_2}$ where $\gro_0$ is the standard volume form on $\bbc\bbp^1$.

We remind the reader that by the $\bfw$-Sasaki cone we mean the two dimensional subcone of Sasaki cone induced by the Sasaki cone of $S^3_\bfw$. It is denoted by $\gt_\bfw^+$ and can be identified with the open first quadrant in $\bbr^2$.

\begin{theorem}\cite{BoToJGA}\label{join data}
Let $M_{l_1,l_2,\bfw}=M\star_{l_1,l_2}S^3_\bfw$ be the join as described above with the induced contact structure $\cald_{l_1,l_2,\bfw}$. Let $\bfv=(v_1,v_2)$ be a weight vector with relatively prime integer components and let $\xi_\bfv$ be the corresponding Reeb vector field in the Sasaki cone $\gt^+_\bfw$ and let
$s=\gcd(|w_2v_1-w_1v_2|,l_2)$. Then the quotient of $M_{l_1,l_2,\bfw}$ by the flow of the Reeb vector field $\xi_\bfv$ is a projective algebraic orbifold written as a the log pair $(S_n,\grD)$ where $S_n$ is the total space of the projective bundle $\bbp(\BOne\oplus L_n)$ over the K\"ahler manifold $N$ with $n=l_1\bigl(\frac{w_1v_2-w_2v_1}{s}\bigr)$, $\grD$ the branch divisor
\begin{equation}\label{branchdiv2}
\grD=(1-\frac{1}{m_1})D_1+ (1-\frac{1}{m_2})D_2,
\end{equation}
with ramification indices $m_i=v_i\frac{l_2}{s}=v_im$ and divisors $D_1$ and $D_2$ given by the zero section $\BOne\oplus 0$ and infinity section $0\oplus L_n$, respectively. The fiber of the orbifold $(S_n,\grD)$ is the orbifold $\bbc\bbp[v_1,v_2]/\bbz_m$.

Moreover, the induced K\"ahler structure $(g_B,\gro_B)$ on $(S_n,\grD)$ may be chosen to be admissible, with K\"ahler class given by $r= \frac{w_1v_2-w_2v_1}{w_1v_2+w_2v_1}$, and satisfies
$$g^T =\frac{l_2}{4\pi}\frac{\pi_\bfv^* g}{mv_1v_2}=\frac{\pi_\bfv^* g_B}{mv_1v_2},\qquad d\eta_\bfv=\frac{l_2}{4\pi} \frac{\pi_\bfv^*\gro}{mv_1v_2} =\frac{\pi_\bfv^*\gro_B}{mv_1v_2}=\gro^T$$
where $g^T=d\eta\circ (\BOne\otimes \Phi)$.

\end{theorem}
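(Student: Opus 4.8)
The statement is quoted from \cite{BoToJGA}, so the plan is to reconstruct its proof from the join construction rather than to invoke it. First I would recall that $M_{l_1,l_2,\bfw}$ is built as the quotient of the product $M\times S^3_\bfw$ by the locally free circle action generated (up to scale) by $\tfrac{1}{l_1}\xi_M-\tfrac{1}{l_2}\xi_\bfw$, where $\xi_M$ is the Reeb field of the regular Sasaki manifold $M\to N$ and $\xi_\bfw$ is a fixed Reeb field of $S^3_\bfw$. The residual rank-two torus (coming from the commuting circle symmetries of $M$ and of $S^3_\bfw$) descends to $M_{l_1,l_2,\bfw}$ and furnishes the $\bfw$-cone $\gt^+_\bfw$, and $\xi_\bfv$ is the Reeb field attached to the weight vector $\bfv=(v_1,v_2)$. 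The key observation is that the quotient of $M_{l_1,l_2,\bfw}$ by the flow of $\xi_\bfv$ can be realized as a single quotient of $M\times S^3_\bfw$ by the rank-two torus spanned by the join circle and $\xi_\bfv$.

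Next I would carry out this double quotient in stages. Quotienting the $S^3_\bfw$ factor by the circle generated by $\xi_\bfv$ (inside its own torus) produces the weighted projective line, i.e. the football orbifold $\bbc\bbp^1[v_1,v_2]$ with cyclic isotropy $\bbz_{v_1},\bbz_{v_2}$ at the two poles $z_2=0$ and $z_1=0$. Because this action is intertwined with the bundle $M\to N$ through the join circle, the resulting space fibers over $N$ with weighted $\bbc\bbp^1$ fibers, and I would identify it with the projectivization $\bbp(\BOne\oplus L_n)$ of a rank-two bundle over $N$. The integer $n$ is then pinned down by an Euler-class computation: the circle bundle $M\to N$ carries the primitive class $[\gro_N]$, and propagating this class through the join parameters and the quotient by $\xi_\bfv$ multiplies the degree by $l_1$ and by the relative weight $(w_1v_2-w_2v_1)/s$, giving $n=l_1(w_1v_2-w_2v_1)/s$, with $L_n=L^{\otimes n}$ for $L$ the line bundle satisfying $c_1(L)=[\gro_N]$.

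I would then read off the orbifold structure by examining the isotropy of $\xi_\bfv$ along the two $\xi_\bfv$-invariant loci lying over $z_1=0$ and $z_2=0$, which map to the zero section $D_1$ and infinity section $D_2$ of $\bbp(\BOne\oplus L_n)$. Along these sections the stabilizer combines the weighted-$\bbc\bbp^1$ isotropy $\bbz_{v_i}$ with the extra cyclic factor $\bbz_{l_2/s}$ coming from the join quotient, producing the ramification indices $m_i=v_i\,l_2/s=v_i m$ and hence the branch divisor $\grD=(1-\tfrac{1}{m_1})D_1+(1-\tfrac{1}{m_2})D_2$. The generic fiber acquires the uniform isotropy $\bbz_m$, so that the orbifold fiber is $\bbc\bbp[v_1,v_2]/\bbz_m$, exactly as claimed.

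Finally I would establish admissibility together with the metric normalizations. Since $N$ is CSC K\"ahler and the transverse geometry of the join over $N\times\bbc\bbp^1[\bfw]$ is of Calabi type, the induced K\"ahler metric on $(S_n,\grD)$ falls into the admissible class used in \cite{BoToJGA}; I would verify this by matching the fiberwise momentum profile inherited from $\gro_\bfw$ with the admissible normal form over $N$, reading the class parameter $r=(w_1v_2-w_2v_1)/(w_1v_2+w_2v_1)$ off the endpoints of the momentum interval fixed by the weights. The displayed relations among $g^T$, $d\eta_\bfv$ and the pullbacks $\pi_\bfv^*g,\ \pi_\bfv^*g_B,\ \pi_\bfv^*\gro,\ \pi_\bfv^*\gro_B$ then follow from computing the length $\tfrac{l_2}{4\pi m v_1 v_2}$ of a generic $\xi_\bfv$-orbit. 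The main obstacle throughout is the consistent bookkeeping of the integer and $\gcd$ data --- especially isolating the common factor $s=\gcd(|w_2v_1-w_1v_2|,l_2)$ and the quotient $m=l_2/s$ so that $n$, the ramification indices, and the generic isotropy all come out correctly normalized; by contrast, the admissibility and the final metric relations are comparatively routine once the projective-bundle and orbifold data have been correctly identified.
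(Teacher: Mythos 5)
This theorem is not proved in the paper at hand---it is imported verbatim from \cite{BoToJGA}---so the only comparison available is with the proof in that reference, and your reconstruction follows it in all essentials: realizing the quotient of $M_{l_1,l_2,\bfw}$ by the flow of $\xi_\bfv$ as a stage-wise two-torus quotient of $M\times S^3_\bfw$, identifying the result as $\bbp(\BOne\oplus L_n)$ over $N$ with the degree $n=l_1(w_1v_2-w_2v_1)/s$ fixed by an Euler-class computation, extracting the branch divisor with ramification indices $m_i=v_im$ and the generic $\bbz_m$ fiber isotropy from the stabilizer analysis, and then checking admissibility of the induced K\"ahler structure. Your outline is correct; the only slip is cosmetic---the quantity $\tfrac{l_2}{4\pi m v_1 v_2}$ is the metric rescaling constant in the displayed relations, whereas the generic orbit integral is $\int_{L_\xi}\eta_\bfv=2\pi/s=2\pi m/l_2$ (the fact quoted from Proposition 3.4 of \cite{BoToJGA})---and this does not affect the argument.
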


\subsection{The Einstein-Hilbert Functional}

%{\it In following calculations and statements we will ignore any overall positive rescale that does not depend on the choice of $(v_1,v_2)$.}

We compute the Einstein-Hilbert functional on the $\bfw$-cone of the join $M_{l_1,l_2,\bfw}=M\star_{l_1,l_2}S^3_\bfw$. An arbitrary element of $\gt^+_\bfw$ takes the form $\xi_\bfv=v_1H_1+v_2H_2$ where $H_i$ are the restrictions to $M_{l_1,l_2,\bfw}$ of the infinitesimal generators of the rotation $z_i\mapsto e^{i\theta}z_i$ on $S^3$, and let $a=H_2$. The scale invariance of $\bfH(\xi)$ allows us to write $\bfH(\xi_\bfv)=\bfH(H_1+bH_2)$ where $b=\frac{v_2}{v_1}$. For convenience we shall write $\bfH(b)$ instead of $\bfH(H_1+bH_2)$ and $\bfH'(b)$ for $d\bfH_\xi(a)$. To obtain an explicit expression for $\bfH(b)$ we need to compute the total transverse scalar curvature $\bfS_\xi$ and the volume $\bfV_\xi$. To simplify the presentation, we will in the following calculations ignore any overall positive rescale that does not depend on the choice of $(v_1,v_2)$. We begin with some preliminaries. Suppose we have a quasi-regular ray in the $\bf w$-cone given by a choice of co-prime ${\bf v}= (v_1,v_2) \neq (w_1,w_2)
$. 
According to Theorem \ref{join data} above, we may assume that the transverse K\"ahler metric is admissible (for a full description of such metrics see e.g. Sections 2.3 and 5 of \cite{BoToJGA}). The volume form is then given by
$$dv_{g_\bfv} = \frac{1}{(d_N+1)!}\eta_{\bf v} \wedge (d\eta_{\bf v})^{d_N+1}. $$
By Theorem \ref{join data} and Equation (42) in \cite{BoToJGA}, we have $(d_N+1)!dv_{g_\bfv}=$
\begin{equation}\label{voleqn}
= \eta_{\bf v} \wedge \pi_\bfv^*(\frac{\gro_B}{mv_1v_2})^{d_N+1} = \frac{n^{d_N}}{(mv_1v_2)^{d_N+1}}\eta_{\bf v} \wedge \pi_\bfv^*\left((r^{-1} + \gz)^{d_N} \omega_N^{d_N} \wedge d\gz \wedge \theta \right). 
\end{equation}
By Theorem \ref{join data} and Equation (46) in \cite{BoToJGA}, the transverse scalar curvature is given by
\begin{equation}\label{Scaleqn}
Scal^T = mv_1v_2 \pi_{\bfv}^*Scal_B = mv_1v_2 \pi_{\bfv}^*\left(\frac{2 d_N s_{N_n} r }{1+r\gz} - \frac{F''(\gz)}{\gp(\gz)}\right),
\end{equation}
where $\gp(\gz) =(1 + r \gz)^{d_{N}}$,  $s_{N_n} = A/n = \frac{ A s}{l_1(w_1v_2-w_2v_1)}$, and $F$ is a smooth function satisfying
\begin{equation}
\label{positivityF}
\begin{array}{l}
F(\gz) > 0, \quad -1 < \gz <1,\\
\\
F(\pm 1) = 0,\\
\\
F'(- 1) = 2\,\gp(-1)/m_2 \quad \quad F'( 1) =-2\,\gp(1)/m_1.
\end{array}
\end{equation}
Here $2d_N A$ denotes the constant scalar curvature of $\omega_N$, so for example if $N=\bbc\bbp^{d_N}$ then $A=d_N+1$ and if $N$ is a compact Riemann surface of
genus $\calg$ then $A=2(1-\calg)$.

\begin{lemma}\label{Hwcone}
On the manifolds $M_{l_1,l_2,\bfw}$ with $b=v_2/v_1 \neq w_2/w_1$ - up to an overall positive constant rescale that does not depend on $(v_1,v_2)$ - the Einstein-Hilbert functional takes the form
\begin{equation*}\label{HE2}
\begin{array}{ccl}
\bfH(b)&=& \frac{ \left(l_1w_1^{d_N+1}b^{d_N+2}+ (l_2A-l_1w_2)w_1^{d_N}b^{d_N+1} + (l_1w_1- l_2A)w_2^{d_N}b  -l_1w_2^{d_N+1}\right)^{d_N+2}}{ (w_1 b-w_2)\left( w_1^{d_N+1}b^{d_N+2} - w_2^{d_N+1}b\right)^{d_N+1}}.\\
\end{array}
\end{equation*}
Furthermore, we have the boundary behavior 
$$\lim_{b\rightarrow 0}\bfH(b) =+\infty, \qquad \lim_{b\rightarrow +\infty}\bfH(b) =+\infty. $$
\end{lemma}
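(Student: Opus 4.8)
The plan is to obtain the explicit rational expression for $\bfH(b)$ by separately computing $\bfS_\xi$ and $\bfV_\xi$ as functions of $b=v_2/v_1$, and then assembling $\bfH=\bfS_\xi^{d_N+2}/\bfV_\xi^{d_N+1}$ (here the dimension is $2n+1=2(d_N+1)+1$, so $n=d_N+1$). First I would integrate the volume form \eqref{voleqn}: since $\eta_{\bf v}\wedge\pi_\bfv^*(\cdots)$ integrates by pushing forward along $\pi_\bfv$, the base integral factors as $\int_N\omega_N^{d_N}$ times $\int_{-1}^{1}(r^{-1}+\gz)^{d_N}\,d\gz$ (the $\theta$-integral giving a constant). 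I would discard all overall constants independent of $(v_1,v_2)$, as the statement permits. The key arithmetic input is translating $r=\frac{w_1v_2-w_2v_1}{w_1v_2+w_2v_1}$ and $m=l_2/s$, $mv_1v_2$, etc., back into $b$; writing $r^{-1}=\frac{w_1b+w_2}{w_1b-w_2}$ one finds $\int_{-1}^1(r^{-1}+\gz)^{d_N}d\gz$ is, up to constants, $\bigl((r^{-1}+1)^{d_N+1}-(r^{-1}-1)^{d_N+1}\bigr)$, and $r^{-1}\pm1$ are proportional to $w_1b$ and $w_2$ respectively. After clearing the common denominator $(w_1b-w_2)^{d_N+1}$ and the factors of $mv_1v_2$, this should produce $\bfV_\xi\propto \frac{w_1^{d_N+1}b^{d_N+2}-w_2^{d_N+1}b}{(w_1b-w_2)^{d_N+1}}$, matching the denominator of $\bfH(b)$ with its $(d_N+1)$-st power.

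Next I would compute $\bfS_\xi=\int_M Scal^T\,dv_{g_\bfv}$ using \eqref{Scaleqn}. The factor $mv_1v_2$ in $Scal^T$ cancels the $(mv_1v_2)^{-1}$ discrepancy between $Scal^T$ and $Scal_B$, so the integrand over the interval becomes $\bigl(\tfrac{2d_N s_{N_n}r}{1+r\gz}-\tfrac{F''(\gz)}{\gp(\gz)}\bigr)\gp(\gz)=2d_N s_{N_n}r(1+r\gz)^{d_N-1}-F''(\gz)$ against $d\gz$. The crucial simplification is that $\int_{-1}^1 F''(\gz)\,d\gz=F'(1)-F'(-1)$, which by \eqref{positivityF} equals $-2\gp(1)/m_1-2\gp(-1)/m_2$ — a boundary term that depends only on $r$ and the ramification indices $m_i=v_im$, so $F$ itself never needs to be known explicitly. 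Translating $s_{N_n}=As/\bigl(l_1(w_1v_2-w_2v_1)\bigr)$, the indices $m_i$, and the two terms $(1+r\gz)^{d_N}$ evaluated at $\gz=\pm1$ back into $b$, clearing the same denominator $(w_1b-w_2)^{d_N+1}$, should give $\bfS_\xi$ proportional to the bracketed quartic-in-structure numerator of $\bfH(b)$ divided by $(w_1b-w_2)^{\text{power}}$. The emergence of the combination $l_2A-l_1w_2$ and $l_1w_1-l_2A$ in the numerator is exactly what the $s_{N_n}$ (carrying $A$ and $l_1$) and the boundary terms (carrying $l_2$ through $m=l_2/s$) conspire to produce.

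Assembling $\bfH=\bfS_\xi^{d_N+2}/\bfV_\xi^{d_N+1}$ then combines denominators: the numerator contributes $(w_1b-w_2)^{-(d_N+2)\cdot k}$ and the volume contributes $(w_1b-w_2)^{(d_N+1)^2}$ in the numerator of $\bfH$, and after bookkeeping one lands on the single surviving factor $(w_1b-w_2)$ in the denominator together with $\bigl(w_1^{d_N+1}b^{d_N+2}-w_2^{d_N+1}b\bigr)^{d_N+1}$, as claimed. For the boundary behavior I would analyze the leading powers of $b$: as $b\to0$ the numerator tends to $(-l_1w_2^{d_N+1})^{d_N+2}\neq0$ while the denominator $\sim(-w_2)(-w_2^{d_N+1}b)^{d_N+1}$ vanishes like $b^{d_N+1}$, forcing $\bfH\to+\infty$; as $b\to+\infty$ the numerator grows like $b^{(d_N+2)^2}$ and the denominator like $b\cdot b^{(d_N+2)(d_N+1)}=b^{(d_N+2)(d_N+1)+1}$, and since $(d_N+2)^2>(d_N+2)(d_N+1)+1$ for $d_N\geq0$, again $\bfH\to+\infty$ (the positivity of the limits following from positivity of $\bfV_\xi$ and the fact that the top-degree coefficients are positive powers of the $w_i,l_i$).

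\textbf{Main obstacle.} The genuine difficulty is not conceptual but the bookkeeping of the many $(v_1,v_2)$-dependent factors — $r$, $m$, $mv_1v_2$, $s_{N_n}$, $m_i=v_im$, and the homogenization from $\xi_\bfv$ to $H_1+bH_2$ — so that every piece that is allowed to be discarded is correctly identified as independent of $(v_1,v_2)$, and the surviving $b$-dependence lands on exactly the stated polynomials. In particular I expect the delicate point to be confirming that the $s$ and $\gcd$ factors hidden in $m$ and $s_{N_n}$ cancel completely (they must, since the final formula has no $s$), and that the boundary term $F'(1)-F'(-1)$ is handled with the correct signs so that the coefficients $l_2A-l_1w_2$ and $l_1w_1-l_2A$ appear with the right placement in the numerator.
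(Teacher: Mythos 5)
Your strategy is exactly the paper's: compute $\bfS_\xi$ and $\bfV_\xi$ separately from the admissible data of Theorem \ref{join data}, reduce to interval integrals by Fubini, evaluate $\int_{-1}^1 F''(\gz)\,d\gz = F'(1)-F'(-1)$ via the endpoint conditions \eqref{positivityF} so that $F$ is never needed explicitly, check that the $s$- and $m$-factors cancel, and assemble $\bfH=\bfS_\xi^{d_N+2}/\bfV_\xi^{d_N+1}$; your limit analysis at $b\to 0,+\infty$ is also correct and is essentially what the paper leaves implicit. However, your intermediate formulas are wrong in a way that makes the final assembly, as you describe it, impossible. The correct volume (the paper's \eqref{Vol}) is
$$\bfV_\xi \;\propto\; \frac{(w_1b)^{d_N+1}-w_2^{d_N+1}}{(w_1 b-w_2)\,v_1^{d_N+2}\,b^{d_N+1}},$$
with a \emph{single} power of $(w_1b-w_2)$ downstairs, not your claimed $\bfV_\xi\propto\bigl(w_1^{d_N+1}b^{d_N+2}-w_2^{d_N+1}b\bigr)/(w_1b-w_2)^{d_N+1}$; the two differ by the non-constant factor $(bv_1)^{d_N+2}/(w_1b-w_2)^{d_N}$. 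The factor $w_1^{d_N+1}b^{d_N+2}-w_2^{d_N+1}b$ in the denominator of $\bfH(b)$ is \emph{not} the numerator of $\bfV_\xi$: it only appears after assembly, as $b^{d_N+1}\bigl[(w_1b)^{d_N+1}-w_2^{d_N+1}\bigr]$, where the $b^{d_N+1}$ is the residue of cancelling $b^{(d_N+1)(d_N+2)}$ coming from $\bfS_\xi^{d_N+2}$ against $b^{(d_N+1)^2}$ coming from $\bfV_\xi^{d_N+1}$.

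Concretely, with the correct formulas, $\bfS_\xi\propto P(b)/\bigl((w_1b-w_2)v_1^{d_N+1}b^{d_N+1}\bigr)$ (the paper's \eqref{totalScal}, with $P(b)$ the bracketed polynomial in the statement) and $\bfV_\xi$ as above, each carries exactly one power of $(w_1b-w_2)$, so the quotient leaves $(w_1b-w_2)^{(d_N+2)-(d_N+1)}=(w_1b-w_2)$ downstairs, while the $v_1$-powers cancel exactly ($v_1^{(d_N+1)(d_N+2)}$ against $v_1^{(d_N+2)(d_N+1)}$). With your claimed $\bfV_\xi$, matching the stated $\bfH$ would force $\bfS_\xi\propto P(b)/(w_1b-w_2)^{k}$ with $k(d_N+2)=(d_N+1)^2+1$, which has no integer solution for $d_N\geq 1$; so the bookkeeping paragraph of your proposal cannot close. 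Two smaller slips of the same kind: the integrand for $\bfS_\xi$ carries the weight $(r^{-1}+\gz)^{d_N}=r^{-d_N}(1+r\gz)^{d_N}$, and the $b$-dependent factor $r^{-d_N}$ you silently dropped must be kept; and the fiber integral $\int_{L_\xi}\eta_{\bf v}=2\pi m/l_2$ is not a constant (it carries the $m=l_2/s$ that effects the $s$-cancellation you correctly demand), so it cannot be folded into ``the $\theta$-integral giving a constant.'' None of this is conceptual --- redoing the integrals as you outline, but with the $v_1$-, $b$- and $m$-factors tracked, is precisely the paper's proof --- but since the entire content of this lemma is that bookkeeping, the proposal as written does not establish the stated formula.
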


\begin{proof}
Now from Equations \eqref{voleqn} and \eqref{Scaleqn} we have
\begin{eqnarray*}
&\int_{M_{l_1,l_2,\bfw}} Scal^T dv_{g_\bfv}  \\
&=  \frac{n^{d_N}}{(mv_1v_2)^{d_N}}\int_{M_{l_1,l_2,\bfw}} \eta_{\bf v} \wedge \pi_{\bfv}^*\left(\left(\frac{2 d_N s_{N_n} r }{1+r\gz} - \frac{F''(\gz)}{\gp(\gz)}\right)(r^{-1} + \gz)^{d_N} \omega_N^{d_N} \wedge d\gz\wedge\theta\right),
\end{eqnarray*}
and
$$\int_{M_{l_1,l_2,\bfw}} dv_{g_\bfv} =  \frac{n^{d_N}}{(mv_1v_2)^{d_N+1}}\int_{M_{l_1,l_2,\bfw}} \eta_{\bf v} \wedge \pi_{\bfv}^*\left((r^{-1} + \gz)^{d_N} \omega_N^{d_N} \wedge d\gz\wedge\theta\right).$$
Applying Fubini's Theorem we have
$$\int_{M_{l_1,l_2,\bfw}} Scal^T\,dv_{g_\bfv} =  \frac{n^{d_N}}{(mv_1v_2)^{d_N}}\int_{S_n}\left( \int_{L_\xi} \eta_{\bf v}\right) \left(\frac{2 d_N s_{N_n} r }{1+r\gz} - \frac{F''(\gz)}{\gp(\gz)}\right)(r^{-1} + \gz)^{d_N} \omega_N^{d_N} \wedge d\gz\wedge\theta,$$
and
$$\int_{M_{l_1,l_2,\bfw}} dv_{g_\bfv} =  \frac{n^{d_N}}{(mv_1v_2)^{d_N+1}}\int_{S_n}\left( \int_{L_\xi} \eta_{\bf v}\right)(r^{-1} + \gz)^{d_N}  \omega_N^{d_N} \wedge d\gz\wedge\theta,$$
where $L_\xi$ is a generic leaf of the foliation $\calf_\xi$. Now by (1) of Proposition 3.4 in \cite{BoToJGA} we have that
$$\int_{L_\xi} \eta_{\bf v}= 2\pi/s = 2\pi m/{l_2}$$ and so without loss of generality we may
say that
$$
\begin{array}{ccl}
&& \int_{M_{l_1,l_2,\bfw}} Scal^T\,dv_{g_\bfv}  \\
\\
& = & (\frac{n}{mv_1v_2})^{d_N} \int_{-1}^1  \left(\frac{2 d_N s_{N_n}m r }{1+r\gz} - \frac{mF''(\gz)}{\gp(\gz)}\right)(r^{-1} + \gz)^{d_N} \,d\gz\\
\\
&=&  (\frac{n}{mv_1v_2})^{d_N} \left( \int_{-1}^1 2 d_N s_{N_n}m (r^{-1} + \gz)^{d_N-1} \,d\gz - r^{-d_N}  \int_{-1}^1 m F''(\gz)\,d\gz\right)\\
\\
&=&  (\frac{n}{mv_1v_2})^{d_N} \left( 2s_{N_n}m((r^{-1} +1)^{d_N} - (r^{-1} -1)^{d_N}) - r^{-d_N}(mF'(1)-mF'(-1))\right)\\
\\
&=& (\frac{n}{mv_1v_2})^{d_N} \left( \frac{2Am}{n}((r^{-1} +1)^{d_N} - (r^{-1} -1)^{d_N}) +2 r^{-d_N}((1+r)^{d_N}/v_1 + (1-r)^{d_N}/v_2)\right)\\
%%\\
%%&=&2 (\frac{n}{mrv_1v_2})^{d_N} \left( \frac{Am}{n}((1 +r)^{d_N} - (1-r)^{d_N}) +((1+r)^{d_N}/v_1 + (1-r)^{d_N}/v_2)\right)\\
%%\\
%&=&2 (\frac{l_1(w_1 v_2+w_2v_1)}{l_2v_1v_2})^{d_N} \left( \frac{l_2A}{l_1(w_1v_2-w_2v_1)}((1 +r)^{d_N} - (1-r)^{d_N}) +((1+r)^{d_N}/v_1 + (1-r)^{d_N}/v_2)\right)\\
%\\
%&=&2 (\frac{l_1(w_1 v_2+w_2v_1)}{l_2v_1v_2})^{d_N} \left( \frac{l_2A}{l_1(w_1v_2-w_2v_1)}((1 +r)^{d_N} - (1-r)^{d_N}) +((1+r)^{d_N}/v_1 + (1-r)^{d_N}/v_2)\right)\\
\\
&=&2^{d_N+1} (\frac{l_1}{l_2})^{d_N}\frac{\left(l_2 A v_1v_2((w_1v_2)^{d_N} - (w_2v_1)^{d_N}) +l_1(w_1 v_2-w_2v_1)(w_1^{d_N}v_2^{d_N+1} + w_2^{d_N} v_1^{d_N+1})\right)}{l_1(w_1 v_2-w_2v_1)v_1^{d_N+1}v_2^{d_N+1}}
\end{array}
$$
and
$$
\begin{array}{ccl}
&& \int_{M_{l_1,l_2,\bfw}} dv_{g_\bfv}\\
\\
& = &(\frac{n}{m})^{d_N}\frac{1}{(v_1v_2)^{d_N+1}}\int_{-1}^{1} (r^{-1} + \gz)^{d_N}\\
\\
& = & (\frac{n}{m})^{d_N}\frac{1}{(d_N+1)(v_1v_2)^{d_N+1}}\left( (r^{-1} +1)^{d_N+1} - (r^{-1}-1)^{d_N+1})\right)\\
\\
%& = & (\frac{n}{mrv_1v_2})^{d_N}\frac{1}{(d_N+1)r(v_1v_2)}\left( (1+r)^{d_N+1} - (1-r)^{d_N+1})\right)\\
%\\
%& = & (\frac{l_1(w_1 v_2+w_2v_1)}{l_2v_1v_2})^{d_N}\frac{1}{(d_N+1)r(v_1v_2)}\left( (1+r)^{d_N+1} - (1-r)^{d_N+1})\right)\\
%\\
&=& 2^{d_N+1}(\frac{l_1}{l_2})^{d_N}\frac{\left((w_1v_2)^{d_N+1} - (w_2v_1)^{d_N+1} \right)}{(d_N+1)(w_1 v_2-w_2v_1)v_1^{d_N+1}v_2^{d_N+1}}.
\end{array}
$$
We set $b = \frac{v_2}{v_1} \neq \frac{w_2}{w_1}$ and  arrive at the following (where, as mentioned above, we ignore any overall positive rescale that does not depend on the choice of $(v_1,v_2)$).

\begin{equation}\label{totalScal}
\begin{array}{ccl}
&& \int_{M_{l_1,l_2,\bfw}} Scal^T\,dv_{g_\bfv} \\
\\
& = & \frac{\left(l_2 A b((w_1b)^{d_N} - w_2^{d_N}) +l_1(w_1 b-w_2)(w_1^{d_N}b^{d_N+1} + w_2^{d_N})\right)}{(w_1 b-w_2)v_1^{d_N +1}b^{d_N+1}}\\
\\
&=&  \frac{\left(l_2 A (w_1^{d_N-1} b^{d_N} + w_1^{d_N-2}w_2 b^{d_N-1} + \,\cdots \,+ w_1 w_2^{d_N-2}b^2 + w_2^{d_N-1}b) +l_1(w_1^{d_N}b^{d_N+1} + w_2^{d_N})\right)}{v_1^{d_N +1}b^{d_N+1}}\\
\end{array}
\end{equation}

\begin{equation}\label{Vol}
\begin{array}{ccl}
&& \int_{M_{l_1,l_2,\bfw}} dv_{g_\bfv}\\
\\
&= &   \frac{\left((w_1b)^{d_N+1} - w_2^{d_N+1} \right)}{(w_1 b-w_2)v_1^{d_N+2}b^{d_N+1}}\\
\\
& = &  \frac{\left(w_1^{d_N} b^{d_N} + w_1^{d_N-1}w_2 b^{d_N-1} +\, \cdots \,+ w_1 w_2^{d_N-1}b + w_2^{d_N}\right)}{v_1^{d_N+2}b^{d_N+1}}.
\end{array}
\end{equation}

\begin{rem}\label{Ageq0rem}
Even though the above calculations were done assuming that $(v_1,v_2)$ defined a quasi-regular ray, i.e. that $b \in \bbq^+$ and that $b \neq \frac{w_2}{w_1}$,  continuity and the Approximation Theorem, due to Rukimbira \cite{Ruk95}, saying that the irregular rays ($b$ non-rational) can be approximated by a sequence of quasi-regular rays, tells us that 
\eqref{totalScal} and \eqref{Vol} hold for all $b \in \bbr^+$, i.e. for all rays in the $\bfw$-cone. We also note the obvious fact that if $A \geq 0$, i.e. the constant scalar curvature of $(N,\omega_N)$ is non-negative, then the total transverse scalar curvature $ \int_{M_{l_1,l_2,\bfw}} Scal^T\,dv_{g_\bfv} >0$ for all $b\in \bbr^+$. However, when $A<0$ and $l_2$ is sufficiently large, then there may exist up to two values of $b\in \bbr^+$ where $ \int_{M_{l_1,l_2,\bfw}} Scal^T\,dv_{g_\bfv} =0$ (and in-between those two values, $ \int_{M_{l_1,l_2,\bfw}} Scal^T\,dv_{g_\bfv}$ is negative).
\end{rem}

Now we are ready to write down the Einstein-Hilbert functional for the $\bfw$-cone. 
\begin{equation}\label{HE1}
\begin{array}{ccl}
\bfH(b)&=& \frac{\left(\int_{M_{l_1,l_2,\bfw}} Scal^T\,dv_{g_\bfv}\right)^{d_N+2}}{ \left(\int_{M_{l_1,l_2,\bfw}} dv_{g_\bfv}\right)^{d_N+1}}\\
\\
&=&\frac{\left( l_2 A(w_1^{d_N-1} b^{d_N} + w_1^{d_N-2}w_2 b^{d_N-1} + \,\cdots \,+ w_1 w_2^{d_N-2}b^2 + w_2^{d_N-1}b) + l_1(w_1^{d_N} b^{d_N+1} + w_2^{d_N})\right)^{d_N+2}}{\left(w_1^{d_N} b^{d_N+1} + w_1^{d_N-1}w_2 b^{d_N} +\, \cdots \,+ w_1 w_2^{d_N-1}b^2 + w_2^{d_N}b\right)^{d_N+1}}
\end{array}
\end{equation}
which reduces to the given form when $b\neq w_2/w_1$.
\end{proof}

\subsection{Admissible CSC constructions}\label{sasadmsec}
Before we consider the critical points of $\bfH(b)$, we recall some observations from \cite{BoToJGA}  concerning the existence of admissible CSC metrics on $M_{l_1,l_2,\bfw}=M\star_{l_1,l_2}S^3_\bfw$. We refer to sections 5.1 and 6.2 (in particular equation (67) and the comments surrounding it) of  \cite{BoToJGA},  for the justification of the statement below.

\begin{proposition}\label{CSCexistence}
Consider a ray in the $\bfw$-cone determined by a choice of $b>0$. Then the Sasakian structures of the ray has admissible CSC metrics (up to isotopy) if and only if $f_{CSC}(b)=0$, where
\begin{equation}\label{fCSCeqn}
f_{CSC}(b) = \frac{-f(b)}{(w_1 b - w_2)^3}
\end{equation} 
and $f(b)$ is a polynomial given as follows:
\begin{equation}\label{functionf}
\begin{array}{ccl}
f(b) & = &-(d_N + 1 ) l_1w_1^{2d_N +3} b^{2 d_N+4}\\
\\
& + & w_1^{2(d_N+1)} b^{2 d_N+3}( A l_2 + l_1 (d_N + 1)  w_2)\\
\\
& - & w_1^{d_N+2}  w_2^{d_N} b^{d_N + 3}  ((d_N+1)  (A ( d_N+1) l_2 - l_1 (( d_N+1) w_1 + ( d_N+2) w_2)))\\
\\
& + & w_1^{d_N+1}  w_2^{d_N+1}  b^{d_N + 2}  (2 A d_N (d_N+2) l_2 - (d_N+1)(2d_N+3) l_1 (w_1 + w_2))\\
\\
& - &  w_1^{d_N}  w_2^{d_N+2}  b^{d_N + 1}(d_N+1) (A ( d_N+1) l_2 - l_1 ((d_N+2) w_1 + ( d_N+1) w_2))\\
\\
& + & w_2^{2 (d_N + 1)}( b (A l_2 + l_1 (d_N + 1) w_1))\\
\\
& - & ( d_N + 1 ) l_1 w_2^{2 d_N + 3} .
\end{array}
\end{equation}
This polynomial has a root of order three at $b=w_2/w_1$ when $w_1>w_2$ and order at least four when $w_1=w_2=1$ (where the case of $b=w_2/w_1=1$ gives a product transverse CSC structure).
Thus $f_{CSC}(b)$ is a polynomial of order $2d_N+1$ with positive roots corresponding to the rays in the $\bfw$-cone that admit admissible CSC metrics.
\end{proposition}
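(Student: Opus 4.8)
The plan is to reduce the existence of an admissible constant scalar curvature (CSC) metric on the ray determined by $b$ to the solvability of a boundary value problem for the profile function $F$ of \eqref{positivityF}, following the admissible calculus developed in \cite{BoToJGA}. Requiring the transverse scalar curvature \eqref{Scaleqn} to be a constant $C$ determines $F''$ completely, namely $F''(\gz)=2d_N s_{N_n} r (1+r\gz)^{d_N-1}-\frac{C}{mv_1v_2}(1+r\gz)^{d_N}$. The admissible metric with this profile is CSC precisely when $F$ can be chosen to meet the four smooth-closure conditions of \eqref{positivityF} as well; since this is an overdetermined system for $C$ and the two integration constants, its solvability is governed by a single scalar obstruction, which by \cite[Sections 5.1 and 6.2]{BoToJGA} (equation (67) there) is exactly $f_{CSC}(b)=0$.

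To obtain the explicit polynomial I would integrate the displayed expression for $F''$ twice, producing $F$ as a degree $d_N+2$ polynomial in $\gz$ with parameters $C,C_1,C_2$, and then impose $F(\pm1)=0$, $F'(1)=-2\gp(1)/m_1$, $F'(-1)=2\gp(-1)/m_2$. Three of these equations fix $C,C_1,C_2$, and substituting into the fourth yields the obstruction. Replacing $r=\frac{w_1b-w_2}{w_1b+w_2}$, $s_{N_n}=\frac{As}{l_1(w_1v_2-w_2v_1)}$, $m_i=v_i\frac{l_2}{s}$, homogenizing in $b=v_2/v_1$, and clearing the denominators in $(w_1b+w_2)$, $b$ and $v_1$ then produces, up to a nonzero constant, the polynomial $f(b)$ of \eqref{functionf}.

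The divisibility $(w_1b-w_2)^3\mid f(b)$ I would read off from the spurious root at $b=w_2/w_1$. This value is exactly where $r=0$, that is $\bfv=\bfw$, the degenerate case $n=0$ excluded from Theorem~\ref{join data}; the factors $r^{-1}$ and $r^{-2}$ carried by $F'$ and $F$ force the cleared numerator to vanish there. I would confirm the order by evaluating $f,f',f''$ at $b=w_2/w_1$ and checking that $f'''(w_2/w_1)\neq0$ when $w_1>w_2$, giving a root of order exactly three, so that $f_{CSC}=-f/(w_1b-w_2)^3$ is a genuine polynomial. In the case $w_1=w_2=1$ the transverse geometry at $b=1$ is the honest K\"ahler product of the CSC manifold $N$ with the round $\bbc\bbp^1$, hence CSC; here I would verify that $f$ vanishes to order at least four, so that $f_{CSC}$ retains a root at $b=1$, correctly recording this product CSC structure. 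Since $f$ has degree $2d_N+4$ (leading term $-(d_N+1)l_1w_1^{2d_N+3}b^{2d_N+4}$), dividing by $(w_1b-w_2)^3$ leaves $f_{CSC}$ of degree $2d_N+1$, whose positive roots are the CSC rays.

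The main obstacle is purely computational: the elimination of $C,C_1,C_2$ and the ensuing clearing of the $r$- and $v_i$-denominators is a long and delicate calculation, and pinning down whether the root at $b=w_2/w_1$ has order exactly three or at least four requires carrying the Taylor expansion of $f$ far enough to expose the cancellations. Both steps are mechanical once the linear system is written down, so the real content lies in the reduction above rather than in the algebra.
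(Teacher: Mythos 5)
Your reduction follows the same route as the justification this paper actually relies on: note that the paper gives no proof of Proposition \ref{CSCexistence} beyond citing Sections 5.1 and 6.2 (equation (67)) of \cite{BoToJGA}, and what you reconstruct---impose constancy of \eqref{Scaleqn}, integrate $F''$ twice, and read the compatibility condition of the overdetermined linear system in $C,C_1,C_2$ as the vanishing of $f_{CSC}$---is precisely the computation behind that citation. The elimination, the clearing of the $r$- and $v_i$-denominators, the divisibility of $f$ by $(w_1b-w_2)^3$, and the degree count are indeed mechanical, and your reading of the special point $b=w_2/w_1$ (order exactly three when $w_1>w_2$, order at least four when $w_1=w_2=1$ because the ray then carries the product transverse CSC structure) is correct.

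There is, however, a genuine gap in your ``if'' direction. Solving the ODE together with the four endpoint conditions is not the same as producing an admissible metric: admissibility also requires the first condition of \eqref{positivityF}, namely $F(\gz)>0$ for $-1<\gz<1$, and your proposal never establishes it---the sentence ``the admissible metric with this profile is CSC precisely when $F$ can be chosen to meet the four smooth-closure conditions'' silently assumes that the solution of the boundary value problem is a metric. This is not a formality: Example \ref{criticalptsnoncsc} of this very paper exhibits rays on which the analogous \emph{extremal} boundary value problem is solvable (the polynomial $F_{ext}$ always exists) and yet no admissible extremal metric exists, exactly because positivity fails. What rescues the CSC case is that positivity is automatic there, but this needs an argument, e.g.: for constant transverse scalar curvature,
\begin{equation*}
F''(\gz)=(1+r\gz)^{d_N-1}\Bigl(2d_N s_{N_n}r-\tfrac{C}{mv_1v_2}(1+r\gz)\Bigr)
\end{equation*}
is a positive function times an affine one, so $F''$ vanishes at most once on $(-1,1)$ (it cannot vanish identically, since then $F$ would be affine and incompatible with $F(\pm1)=0$, $F'(-1)>0$); if $F$ were $\leq 0$ at an interior point, then since the endpoint conditions force $F>0$ near both ends of the interval, $F'$ would have at least three interior zeros and hence, by Rolle, $F''$ at least two, a contradiction. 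Adding this step (or invoking the corresponding positivity statement for CSC profiles in \cite{BoToJGA}) is what turns your compatibility computation into the asserted equivalence.
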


\subsection{Derivative of $\bfH(b)$}

Using Lemma \ref{HE2}, we calculate the  derivative of $\bfH(b)$:
\begin{equation}\label{dervH1}
\begin{array}{ccl}
&& \bfH'(b)\\
\\
&=& 
  \frac{
\left( l_2 Ab (w_1^{d_N} b^{d_N} - w_2^{d_N} )+l_1(w_1 b - w_2)(w_1^{d_N } b^{d_N+1}   + w_2^{d_N} )\right)^{d_N+1}(w_1 b - w_2) f_{CSC}(b)}
{b^{d_N+2}(w_1^{d_N+1} b^{d_N+1} - w_2^{d_N+1} )^{d_N+2}},
\end{array}
\end{equation}
We may rewrite \eqref{dervH1} as follows
\small{
\begin{equation}\label{dervH2}
\begin{array}{ccl}
&& \bfH'(b)\\
\\
&=& 
 \frac{
\left( l_2 A (w_1^{d_N-1} b^{d_N} + w_1^{d_N-2}w_2 b^{d_N-1} + \,\cdots \,+ w_1 w_2^{d_N-2}b^2 + w_2^{d_N-1}b )+l_1(w_1^{d_N } b^{d_N+1}   + w_2^{d_N} )\right)^{d_N+1}f_{CSC}(b)}
{b^{d_N+2}(w_1^{d_N} b^{d_N} + w_1^{d_N-1}w_2 b^{d_N-1} +\, \cdots \,+ w_1 w_2^{d_N-1}b + w_2^{d_N} )^{d_N+2}}.
\end{array}
\end{equation}}

This tells us that for $A\geq 0$, the critical points of $\bfH(b)$ correspond exactly to the ray(s) with admissible cscS metrics. In this case notice also that the sign of $\bfH'(b)$ equals the sign of 
$f_{CSC}(b)$. 
For $A<0$, $\bfH(b)$ may have critical points that do not correspond to admissible cscS rays. See for instance Example \ref{criticalptsnoncsc} below.

An important observation is now that if we compare \eqref{dervH2} with \eqref{totalScal} and \eqref{Vol} we see that

$$
\begin{array}{ccl}
\bfH'(b) &=& \frac{\left(v_1^{d_N +1}b^{d_N+1} \int_{M_{l_1,l_2,\bfw}} Scal^T\,dv_{g_\bfv} \right)^{d_N+1}f_{CSC}(b)}
{b^{d_N+2}\left(  v_1^{d_N+2}b^{d_N+1}  \int_{M_{l_1,l_2,\bfw}} dv_{g_\bfv}   \right)^{d_N+2}}  \\
\\
&=& \frac{\left( \int_{M_{l_1,l_2,\bfw}} Scal^T\,dv_{g_\bfv} \right)^{d_N+1}f_{CSC}(b)}
{(bv_1)^{2d_N+3}\left( \int_{M_{l_1,l_2,\bfw}} dv_{g_\bfv}   \right)^{d_N+2}}. 
\end{array}
$$
Comparing this to Lemma \ref{lemmaCRIT} and using the fact that $\bfH(b)$ is a rational function with only isolated zeroes and that $f_{CSC}$ as well as the Sasaki-Futaki invariant varies smoothly in the Sasaki-cone, we conclude that, up to a positive multiple ,
$f_{CSC}(b)$ represents the value of the Sasaki-Futaki invariant $\bfF_\xi$ at $\xi$ given by $(v_1,v_2)$ in a direction tranversal to the rays. Thus Proposition \ref{CSCexistence} tells us that in this case the vanishing of the Sasaki-Futaki invariant implies the existence of cscS metrics for the given ray and thus in cases where the $\bfw$-cone is the entire Sasaki cone we have that the
existence of cscS metrics is equivalent to the vanishing of  $f_{CSC}(b)$ and hence with the vanishing of $\bfF_\xi$. We have arrived at

\begin{lemma}\label{explicitSF}
On the manifolds $M_{l_1,l_2,\bfw}$ - up to an overall positive constant rescale - the Sasaki-Futaki invariant takes the form
$$\bfF_\xi(\Phi(H_2))=\frac{1}{v_2^{2n+1}\bfV_\xi}f_{CSC}(b)$$
where $f_{CSC}(b)$ is given by Equations \eqref{fCSCeqn} and \eqref{functionf}.
\end{lemma}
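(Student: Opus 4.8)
The plan is to read off $\bfF_\xi(\Phi(H_2))$ from the first--variation identity of Lemma~\ref{lemmaCRIT} once the derivative $\bfH'(b)$ has been put in the factored form \eqref{dervH1}. Writing $\xi=\xi_\bfv$ and taking $a=H_2$ (which lies in $T_\xi\kt^+_k$ because $H_1,H_2$ span the abelian algebra $\gt_\bfw$, so $[H_2,\xi]=0$), Lemma~\ref{lemmaCRIT} reads
$$\bfH'(b)=\frac{n(n+1)\bfS_\xi^{\,n}}{\bfV_\xi^{\,n}}\,\bfF_\xi(\Phi(H_2)),$$
where $2n+1=\dim M_{l_1,l_2,\bfw}=2d_N+3$, i.e. $n=d_N+1$. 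The entire proof is then to compare this with the explicit $\bfH'(b)$ and solve for $\bfF_\xi(\Phi(H_2))$.

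First I would identify the two integrals \eqref{totalScal} and \eqref{Vol} with $\bfS_\xi$ and $\bfV_\xi$, each up to a positive constant independent of $(v_1,v_2)$. Matching the numerator and denominator brackets of \eqref{dervH2} against \eqref{totalScal} and \eqref{Vol}, the factorized derivative becomes
$$\bfH'(b)=\frac{\bfS_\xi^{\,n}\,f_{CSC}(b)}{(bv_1)^{2n+1}\,\bfV_\xi^{\,n+1}}.$$
Equating the two expressions for $\bfH'(b)$, the factor $\bfS_\xi^{\,n}$ cancels and, using $bv_1=v_2$, one is left with
$$\bfF_\xi(\Phi(H_2))=\frac{f_{CSC}(b)}{n(n+1)\,v_2^{2n+1}\,\bfV_\xi},$$
which is the asserted formula up to the overall positive constant $\tfrac{1}{n(n+1)}$ and the positive rescalings already suppressed in \eqref{totalScal} and \eqref{Vol}. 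The genuinely computational input is the factored derivative \eqref{dervH1}: establishing that differentiating the rational function of Lemma~\ref{Hwcone} produces exactly the factor $f_{CSC}(b)$ of \eqref{fCSCeqn}--\eqref{functionf} is a lengthy but routine polynomial identity, which I would verify by clearing denominators and comparing coefficients, with Proposition~\ref{CSCexistence} identifying the resulting factor as $f_{CSC}$.

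The one step requiring care is the cancellation of $\bfS_\xi^{\,n}$, which is legitimate only on the open set where $\bfS_\xi\neq 0$. By Remark~\ref{Ageq0rem} the total transverse scalar curvature vanishes for at most two values of $b$, so that set is cofinite in the $\bfw$--cone. Since both sides of the desired identity are rational, hence continuous, in $b$ (the invariant $\bfF_\xi(\Phi(H_2))$ varies smoothly along the $\bfw$--cone, and $f_{CSC}(b)/(v_2^{2n+1}\bfV_\xi)$ is visibly rational), equality on a cofinite set forces equality everywhere; this is precisely the content of the smoothness and isolated--zeros remark preceding the statement. Alternatively, at a value of $b$ with $\bfS_\xi=0$ one can bypass the cancellation altogether by invoking the second clause of Lemma~\ref{lemmaCRIT}, namely $d\bfS_\xi(a)=n\,\bfF_\xi(\Phi(a))$, and computing $d\bfS_\xi(H_2)$ directly from \eqref{totalScal}. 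This continuity argument, rather than any hard estimate, is the main obstacle; everything else is bookkeeping of the powers of $v_1,v_2,b$ and the absorption of positive constants.
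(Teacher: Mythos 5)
Your proposal is correct and follows essentially the same route as the paper: identify \eqref{totalScal} and \eqref{Vol} with $\bfS_\xi$ and $\bfV_\xi$, recognize the factored derivative \eqref{dervH1}--\eqref{dervH2} as $\bfH'(b)=\bfS_\xi^{\,n}f_{CSC}(b)/\bigl((bv_1)^{2n+1}\bfV_\xi^{\,n+1}\bigr)$, equate with Lemma~\ref{lemmaCRIT}, and remove the $\bfS_\xi^{\,n}$ factor by the same rationality/smoothness argument the paper invokes. Your extra observation that the locus $\bfS_\xi=0$ can alternatively be handled by the second clause of Lemma~\ref{lemmaCRIT} is a harmless refinement of the same argument.
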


Note that this theorem together with Corollary 1 of \cite{CollinsSz} proves Theorem \ref{stabcsc} of the Introduction. \hfill $\Box$

As a bonus Lemma \ref{explicitSF} and Proposition 5.2 of \cite{BGS} gives
\begin{corollary}\label{wfull}
If on $M_{l_1,l_2,\bfw}$ with $\xi\in \gt^+_\bfw$ the Sasaki-Futaki invariant $\bfF_\xi$ vanishes on the subalgebra $\gt_\bfw\otimes\bbc$, then it vanishes identically.
\end{corollary}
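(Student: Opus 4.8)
The plan is to chain together three ingredients that are already in hand: the explicit rational formula of Lemma~\ref{explicitSF}, the existence criterion of Proposition~\ref{CSCexistence}, and the standard identical-vanishing property of the transverse Futaki invariant for cscS metrics recalled in Section~\ref{secBG}. First I would unpack the hypothesis. The subalgebra $\gt_\bfw\otimes\bbc$ is two-complex-dimensional and abelian, and since $\bfF_\xi(\xi)=0$ holds automatically, requiring $\bfF_\xi$ to vanish on all of $\gt_\bfw\otimes\bbc$ amounts, modulo this automatic relation, to a single genuine condition in a direction transverse to $\gr_\xi$. Writing $\xi=\xi_\bfv=v_1H_1+v_2H_2$ and $b=v_2/v_1$, the complexification $\gt_\bfw\otimes\bbc$ contains the $J$-transforms of the real generators, so vanishing on it forces in particular $\bfF_\xi(\Phi(H_2))=0$.

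Next I would apply Lemma~\ref{explicitSF}, which identifies this tested value as
$$\bfF_\xi(\Phi(H_2))=\frac{1}{v_2^{2n+1}\bfV_\xi}\,f_{CSC}(b).$$
Since $\bfV_\xi>0$ and $v_2\neq 0$, the vanishing $\bfF_\xi(\Phi(H_2))=0$ is equivalent to the purely numerical condition $f_{CSC}(b)=0$. By Proposition~\ref{CSCexistence}, the vanishing of $f_{CSC}(b)$ is precisely the condition for the ray $\gr_\xi$ in the $\bfw$-cone to admit an admissible cscS metric (up to isotopy). Thus the one-dimensional hypothesis on $\gt_\bfw\otimes\bbc$ upgrades to the \emph{existence} of a constant scalar curvature Sasakian metric in the class determined by $\xi$.

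Finally I would invoke the standard fact, recorded in Section~\ref{secBG} and established in~\cite{BGS} (Proposition~5.2), that whenever $\Sas(\xi,\bar J)$ contains a cscS metric the transverse Futaki invariant $\bfF_\xi$ vanishes \emph{identically} on the full algebra $\mathfrak{h}(\xi,\bar J)$ of transversal holomorphic vector fields, not merely on the two-dimensional $\bfw$-subalgebra. Here I would use that $\bfF_\xi$ depends only on the class $\Sas(\xi,\bar J)$, so the ``up to isotopy'' clause causes no difficulty: an isotopy $\eta\mapsto\eta+d^c\varphi$ preserves both $\xi$ and $\bar J$, hence stays inside $\Sas(\xi,\bar J)$ and leaves $\bfF_\xi$ unchanged. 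Combining this with the previous step gives $\bfF_\xi\equiv 0$, which is the assertion of the corollary.

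The main obstacle I anticipate lies in the passage from $f_{CSC}(b)=0$ to identical vanishing, i.e. in the transition from the \emph{numerical} equality tested in one direction to the \emph{geometric} conclusion over the entire transverse holomorphic Lie algebra. One must be careful that the admissible cscS metric furnished by Proposition~\ref{CSCexistence} genuinely represents the class $\Sas(\xi,\bar J)$ to which the global vanishing theorem of~\cite{BGS} applies; it is the existence of that metric, and not the bare scalar equation $f_{CSC}(b)=0$, that propagates the vanishing of $\bfF_\xi$ from the single tested direction $\Phi(H_2)$ to all of $\mathfrak{h}(\xi,\bar J)$. Once this identification is secured, the remainder of the argument is a direct substitution into the explicit expressions of Equations~\eqref{fCSCeqn} and~\eqref{functionf}.
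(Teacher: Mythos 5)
Your argument is correct and is essentially the paper's own proof: the paper derives the corollary in one line from Lemma~\ref{explicitSF} together with Proposition~5.2 of \cite{BGS}, which is exactly your chain $\bfF_\xi(\Phi(H_2))=0\Rightarrow f_{CSC}(b)=0\Rightarrow$ existence of a cscS representative of $\Sas(\xi,\bar J)$ (Proposition~\ref{CSCexistence}) $\Rightarrow\bfF_\xi\equiv 0$. Your explicit check that the ``up to isotopy'' deformation $\eta\mapsto\eta+d^c\varphi$ preserves $\xi$ and $\bar J$, hence stays inside $\Sas(\xi,\bar J)$ where $\bfF_\xi$ is unchanged, fills in a detail the paper leaves implicit.
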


\subsection{Some Examples}\label{exsec}
In this section we give some examples which illustrate our methods when applied to the $S^3_\bfw$-join construction. In all the examples we give here the $\bfw$-cone is the full Sasaki cone, but we could easily give examples where the full Sasaki cone is bigger with our statements only applying to the $\bfw$-subcone.

\begin{example}\label{tripleCSC}
Let us now assume that $N= \bbc\bbp^2 \# k \overline{\bbc\bbp}^2$ for $4\leq k \leq 8$ ($\bbc\bbp^2$ blown up at $k$ generic points). Then the $\bfw$-cone is the full Sasaki cone. Moreover, $N$ has a KE metric with primitive K\"ahler form $\omega_N$ and, by a theorem of Kobayashi and Ochiai \cite{KoOc73}, $A=1$. We know that in the Sasaki-Einstein case, that is, $c_1(D)=0$, we must have $l_1=1$ and $l_2=w_1+w_2$, and that the functional $\bfH(\xi)$ is (transversally) convex. It is also probably convex in some cases with $c_1(D)\neq 0$ as well. However, if we set $l_1=1$, $w_1=3$, $w_2=2$, and $l_2 = 29$, for example, we have
{\small $$
\begin{array}{ccl}
\bfH(b) & =  & \frac{\left(4+58 b+87 b^2+9 b^3\right)^4}{b^3 \left(4+6 b+9 b^2\right)^3} \\
\\
\bfH'(b) & = & \frac{\left(4+58 b+87 b^2+9 b^3\right)^3 \left(-48+88 b+720 b^2-1242 b^3-459 b^4+243 b^5\right)}{b^4 \left(4+6 b+9 b^2\right)^4}\\
\\
f_{CSC}(b) & = &  243 b^5- 459 b^4- 1242 b^3+ 720 b^2+ 88 b-48\\
\end{array}
$$}
Since $f_{CSC}(0) = -48 <0$, $f_{CSC}(1/3) = 32/3>0$, $f(2/3) = -96 <0$,  and $\lim_{b \rightarrow +\infty}f_{CSC}(b) = + \infty$
we know that $f_{CSC}$ has at least three distinct positive roots and applying Descarte's rule of signs to $f_{CSC}$ we see it has at most 
three positive roots ... so it has \underline
{exactly} three distinct positive roots. Looking at $\bfH'(b)$ above (or simply using that its sign follows the sign of $f_{CSC}(b)$ we observe that the root in the middle is a relative maximum of $f_{CSC}$ while the others are relative minima. At the maximum, obviously, $\bfH''(b)<0$, while at the minima $\bfH''(b)>0$. So $\bfH(\xi)$ fails to be convex in this case. This lack of global convexity on manifolds of the form $M_{l_1,l_2,\bfw}$ occurs in infinitely many cases; in particular, it occurs on $M_{1,l_2,(3,2)}$ (which is a smooth manifold when $\gcd(l_2,6)=1$) for all $l_2\geq 29$. 

\end{example}

\begin{example} \label{criticalptsnoncsc} 
Assume now that $d_N=1$, and $N=\grS_2$ is a genus two compact Riemann surface with $A=-2$ , $l_1=1$, $w_1=3$, $w_2=2$,  and $l_2$ is any positive integer such that $\gcd(l_2,6)=1$.
In this case we have that
$$
\begin{array}{ccl}
\bfH(b) & =  & \frac{(3b^2-2l_2 b +2)^3}{b^2(3 b + 2)^2}\\
\\
\bfH'(b) & = & \frac{2 (3b^2 - 2l_2 b +2)^2 (9 b^3 + (3 l_2+12) b^2  - (12+l_2) b - 4)}{b^3(3b +2)^3}\\
\\
f_{CSC}(b) & = & 2 (9 b^3 + (3 l_2 +12) b^2  - (12+l_2) b - 4)\\
\end{array}
$$
Further, one may verify that the \emph{extremal function}, $F_{ext}(\gz)$, satisfying the (admissible) extremal ODE (arising from setting $Scal_B$ of \eqref{Scaleqn} equal to an affine function of $\gz$)  and the endpoint conditions of Equation \eqref{positivityF}  is - up to a positive rescale -
equal to $(1-\gz^2)g(\gz)$, where 
$$
\begin{array}{ccl}
&& g(\gz)\\
\\
 & = &( 3 b-2) (9 b^3 + 12 b^2 - 12 b  -4 ) \gz^2 + 2 (3 b^2-2) ( 9 b^2+ 24 b+4 ) \gz \\
 \\
& + & 27 b^4 + 126 b^3 + 120 b^2+ 84 b+ 8   \\
\\
& - &  l_2 b (3 b-2)^2 (1 - \gz^2).
\end{array}$$
For a given positive $b \neq 2/3$, the corresponding ray has an \emph{admissible} extremal metric if and only if $F_{ext}(\gz)$, hence $g(\gz)$, is positive for all $-1<\gz <1$.

Now let us look at what happens for a small value of $l_2$ and a large value of $l_2$:
\begin{itemize}
\item For $l_2=1$, the manifold $M_{1,1,(3,2)}$ is the nontrivial $S^3$-bundle over $\grS_2$. We have that the solution ($b>0$) to $f_{CSC}(b)=0$ (i.e. the cscS solution) is approximately $b= 0.835$ and in this case, this is the only critical point of $\bfH(b)$ for $b>0$. It is in fact a minimum.   

\bigskip

\item For $l_2=101$, the manifold $M_{1,101,(3,2)}$ is a nontrivial lens space bundle over $\grS_2$. In this case we have three solutions. First there is the solution to $f_{CSC}(b)=0$ which is approximately $b= 0.685$ and gives a cscS Sasaki metric. At this value of $b$ we still have a local minimum of $\bfH(b)$ (and here $\bfH(b)$ is negative). However, $\bfH(b)$ has two additional critical points which are not local extrema, but inflection points, namely; $b= \frac{2}{101+\sqrt{10195}} \approx 0.099$ and $b= \frac{ \left(101+\sqrt{10195}\right)} {3}\approx 67.3$. These are examples of critical points of $\bfH(b)$ where the Sasaki-Futaki invariant does not vanish. One may check that for either of these values
$g(\gz)$ fails to be positive for all $-1<\gz <1$ and thus the corresponding rays do not allow admissible extremal metrics. These metrics are K-unstable by Theorem \ref{theoSTABcrit}. In this case a computer analysis indicates that there are numbers $b_1\approx 0.295$ and $b_2\approx 1.455$ such that
for $b_1<b<b_2$ we have admissible extremal Sasaki metrics; whereas, for $b$ outside this interval (either side), positivity of $g$ fails and there are no admissible extremal metrics in these two components.
\end{itemize}
\end{example}

\begin{example} \label{bothscal0csc} 
Here we take $N=\grS_\calg$ to be a compact Riemann surface of genus $\calg>1$, so $d_N=1$. Take $l_2A= - 2l_1\sqrt{w_1w_2}$ where $l_2$ is any positive integer such that $\gcd(l_2,w_1w_2)=1$. Since $A=2l_2(1-\calg)$, we see that $\calg=1+\frac{l_1}{l_2}\sqrt{w_1w_2}$.
Now since $l_2$ is relatively prime to $l_1w_1w_2$ we must have $l_2=1$ in which case $M_{l_1,l_2,\bfw}$ is an $S^3$-bundle over $\grS_\calg$. Moreover, $\calg$ must take the form $\calg=1+l_1p_1^{r_1}p_2^{r_2}$ where $p_1,p_2$ are distinct primes, $r_1,r_2\in\bbz^+$ and $\bfw=(p_1^{2r_1},p_2^{2r_2})$. Note that $M_{l_1,l_2,\bfw}$ is the trivial bundle $\grS_\calg\times S^3$ if $l_1(w_1+w_2)$ is even, and the non-trivial bundle $\grS_\calg\tilde{\times} S^3$ when $l_1(w_1+w_2)$ is odd.

Then from Lemma \ref{Hwcone} we find
$$\bfH(b) =\frac{(l_1 w_1 b^2 + l_2 A b + l_1 w_2)^3 }{(b^2(w_1 b + w_2)^2)} $$
and 
$$f_{CSC}(b) = 2l_1 w_1^2 b^3 - w_1(l_2 A - 4 l_1 w_2)b^2 + w_2(l_2 A - 4 l_1 w_1) b -2 l_1 w_2^2$$
One can check that $b=\sqrt{w_2/w_1}$ is a root of both $f_{CSC}(b)$ and  $l_1 w_1 b^2 + l_2 A b + l_1 w_2$.
In fact,   
$$l_1 w_1 b^2 + l_2 A b + l_1 w_2 = l_1w_1\left(b-\sqrt{\frac{w_2}{w_1}}\right)^2,$$
so $\bfH(b)$ has a six-tuple root at $b=\sqrt{w_2/w_1}$. Thus, we recover the cscS metrics $g$ on the manifolds $\grS_\calg\times S^3$ and $\grS_\calg\tilde{\times} S^3$ with constant transverse scalar curvature $s^T_g=0$ described at the end of Section 5.5 in \cite{BoToIMRN}.

\end{example}

\bibliographystyle{abbrv}

\end{document}